\numberwithin{equation}{section}
\theoremstyle{plain}
\newtheorem{theorem}{Theorem}[section]
\newtheorem{lemma}[theorem]{Lemma}
\newtheorem{proposition}[theorem]{Proposition}
\theoremstyle{definition}
\newtheorem{Def}[theorem]{Definition}
\newtheorem{example}[theorem]{Example}
\newtheorem{conjecture}[theorem]{Conjecture}
\newtheorem{remark}[theorem]{Remark}
\newtheorem{?}[theorem]{Problem}
\newcommand\catalannumber[3]{
  % start point, size, Dyck word (size x 2 booleans)
  \fill[gray!25]  (#1) rectangle +(#2,#2);
  % \fill[fill=lime]
  % (#1)
  % \foreach \dir in {#3}{
  %   \ifnum\dir=0
  %   -- ++(0,1)
  %   \else
  %   -- ++(1,0)
  %   \fi
  % } |- (#1);
  \draw[help lines] (#1) grid +(#2,#2);
  \draw[dashed] (#1) -- +(#2,#2);
  \coordinate (prev) at (#1);
  \foreach \dir in {#3}{
    \ifnum\dir=0
    \coordinate (dep) at (0,1);
    \else
    \coordinate (dep) at (1,0);
    \fi
    \draw[line width=2pt,-stealth] (prev) -- ++(dep) coordinate (prev);
  };
}
\def\boxit#1{\leavevmode\hbox{\vrule\vtop{\vbox{\kern.33333pt\hrule
    \kern1pt\hbox{\kern1pt\vbox{#1}\kern1pt}}\kern1pt\hrule}\vrule}}
\newcommand{\circo}{~\raisebox{1pt}{\tikz \draw[line width=0.5pt] circle(1.2pt);}~}% for smaller compositional circle
\def\fS{\mathfrak{S}}
\def\rA{\mathrm{A}}
\def\rD{\mathrm{D}}
\def\rP{\mathrm{P}}
\def\cP{\mathcal{P}}
\def\cS{\mathcal{S}}
\def\cN{\mathcal{M}}
\def\bPI{\mathbf{P_i}}
\def\bs{\mathbf{s}}
\def\bt{\mathbf{t}}
\newcommand\PGI[1]{\mathrm{P}_{i,#1}}
\def\st{\mathrm{st}}
\def\lrmin{\mathrm{lrmin}}
\newcommand{\pattern}[4]{			% mesh pattern
	\raisebox{0.6ex}{
		\begin{tikzpicture}[scale=0.35, baseline=(current bounding box.center), #1]
			\foreach \x/\y in {#4}		\fill[gray!50] (\x,\y) rectangle +(1,1);
			%		\foreach \x/\y in {#4}		\fill[pattern=north east lines] (\x,\y) rectangle +(1,1);
			\draw (0.01,0.01) grid (#2+0.99,#2+0.99);
			\foreach \x/\y in {#3}		\filldraw (\x,\y) circle (6pt);
	\end{tikzpicture}}
}
\title{On fourteen equidistribution conjectures of Lv and Zhang and monotone mesh patterns with corner shadings}
\author[a]{Qi Fang\thanks{qifangpapers@163.com (corresponding author)}}
\author[a,b]{Shishuo Fu \thanks{fsshuo@cqu.edu.cn}}
\author[c]{Segey Kitaev \thanks{sergey.kitaev@strath.ac.uk}}
\author[a]{Haijun Li \thanks{lihaijun@cqu.edu.cn}}
\affil[a]{College of Mathematics and Statistics, Chongqing University, Chongqing 401331, PR China.}
\affil[b]{Key Laboratory of Nonlinear Analysis and its Applications (Chongqing University), Ministry of Education, Chongqing 401331, PR China.}
\affil[c]{Department of Mathematics and Statistics, University of Strathclyde, 26 Richmond Street, Glasgow G1 1XH, UK.}
\begin{document}
\maketitle	
	\begin{abstract}
Three complementation-like involutions are constructed on permutations to prove, and in some cases generalize, all remaining fourteen joint symmetric equidistribution conjectures of Lv and Zhang. Further enumerative results are obtained for several classes of (mesh) pattern-avoiding permutations, where the shadings of all involved mesh patterns are restricted to an opposing pair of corners. \\	

\noindent
{\bf Keywords:} mesh pattern, equidistribution, bijection, Lehmer code, pattern avoidance
\end{abstract}

%%%%%%%%%%%%%%%%%%%%%%%%%%%%%%%%%%%%%%%%
\section{Introduction}\label{sec:intro}
%%%%%%%%%%%%%%%%%%%%%%%%%%%%%%%%%%%%%%%%

Denote the interval $\{x \in \mathbb{Z} : a \le x \le b\}$ of integers by $[a,b]$, and abbreviate $[1,n]$ as $[n]$. Let $\mathfrak{S}_n$ denote the symmetric group of all permutations of $[n]$. The empty permutation is denoted by $\varepsilon$. Given a permutation $\pi = \pi_1\pi_2\cdots\pi_m \in \mathfrak{S}_m$, we say that a permutation $\sigma = \sigma_1\sigma_2\cdots\sigma_n \in \mathfrak{S}_n$ \emph{contains} $\pi$ as a (classical) pattern if there exists a subsequence $\sigma_{i_1}\sigma_{i_2}\cdots\sigma_{i_m}$ that is \emph{order-isomorphic} to $\pi$, i.e., $\sigma_{i_a} < \sigma_{i_b}$ if and only if $\pi_a < \pi_b$ for all $1 \le a, b \le m$. Otherwise, $\sigma$ is said to \emph{avoid} $\pi$. For example, the permutation 435612 avoids the pattern 132. The set of all $\pi$-avoiding permutations of $[n]$ is denoted by $\mathfrak{S}_n(\pi)$. We also use the more general notation $\mathfrak{S}_n(P)$, where $P$ is a finite collection of patterns. A permutation belongs to $\mathfrak{S}_n(P)$ if and only if it avoids every pattern in $P$. A $\sigma_i$ is a {\em left-to-right minimum} (resp., {\em left-to-right maximum}) if $\sigma_i<\sigma_j$ (resp., $\sigma_i>\sigma_j$) for all $1\leq j<i$. A $\sigma_i$ is a {\em right-to-left minimum} (resp., {\em right-to-left maximum}) if $\sigma_i<\sigma_j$ (resp., $\sigma_i>\sigma_j$) for all  $i<j\leq n$. For example, the left-to-right maxima in 435612 are 4, 5, and 6.

A well-known result attributed to MacMahon~\cite{Mac60} and Knuth~\cite{Knu75} states that the cardinality $|\mathfrak{S}_n(\pi)|$ is given by the $n$-th Catalan number $C_n := \frac{1}{n+1} \binom{2n}{n}$ for any of the six permutations $\pi$ in $\mathfrak{S}_3$. In view of the basic operations on $\mathfrak{S}_n$ such as reversal, complementation, and inversion, it is natural to treat $\mathfrak{S}_n(123)$ and $\mathfrak{S}_n(132)$ as representative cases. Several bijections have been constructed between $\mathfrak{S}_n(123)$ and $\mathfrak{S}_n(132)$ to provide direct combinatorial explanations for their equinumerosity; see~\cite{CK08} for a survey and detailed analysis.

Over the past two decades, there has been a significant increase in interest in the study of permutation patterns, which has grown into a remarkably prolific branch of enumerative combinatorics; see, for example, the introductory book~\cite{Kit11} and the references therein. In addition to the classical permutation patterns mentioned above, various generalizations to other types of patterns have continually emerged in the literature~\cite[Chapters 5, 6, 7]{Kit11}. The specific type of pattern that concerns us here is called a \emph{mesh pattern}, first introduced by Br\"and\'en and Claesson~\cite{BC11}. A mesh pattern is a pair $p = (\pi, R)$, where $\pi = \pi_1 \pi_2 \cdots \pi_m \in \mathfrak{S}_m$ and $R \subseteq [0,m] \times [0,m]$. It can be represented pictorially using a grid diagram decorated with $m$ heavy dots and $|R|$ shaded unit cells. The dots are placed at coordinates $(i, \pi_i)$ for $1 \le i \le m$, and a unit cell is shaded if and only if its bottom-left corner $(j,k)$ belongs to $R$. Ignoring the shadings and focusing on the relative heights of the dots recovers the familiar notion of a permutation matrix (see~\cite[Section~1.5]{Sta11}) for the underlying permutation $\pi$. For example, the mesh pattern $p = (132, \{(0,0), (2,3), (3,1), (3,2)\})$ is illustrated below:
$$
\pattern{scale=1}{3}{1/1,2/3,3/2}{0/0,2/3,3/1,3/2}
$$
Two patterns $p$ and $p'$ are said to be \emph{Wilf-equivalent}, denoted $p \sim p'$, if $|\mathfrak{S}_n(p)| = |\mathfrak{S}_n(p')|$ for all positive integers $n$.

One fruitful line of research on mesh patterns involves fixing a certain underlying permutation, say, $123$ (monotone) or $132$, and associating it with various kinds of shadings to create mesh patterns. Researchers then explore Wilf-equivalences among different patterns or compute the distribution of a fixed mesh pattern over all permutations, treating the number of occurrences of the chosen pattern as a permutation statistic. For example, in the work of Kitaev and Liese~\cite{KL13}, the distribution of certain ``border mesh patterns'' is linked to the \emph{harmonic numbers}, while two mesh patterns, each with precisely one shaded cell, are shown to have the \emph{Catalan distribution} on 132-avoiding permutations.

A systematic study of the distributions of mesh patterns has been carried out over the past few years in~\cite{KZ19,KZZ20,LK25,LZ24,LZ25}, extending the earlier systematic investigation of the avoidance of mesh patterns of length~2 in~\cite{HJS15}. In particular, the works of Lv and Kitaev~\cite{LK25} and Lv and Zhang~\cite{LZ24,LZ25} explore many (joint) equidistributions of mesh patterns with underlying permutations $123$, $321$, and $132$. The focus of \cite{LK25} was on symmetric shadings, while \cite{LZ24,LZ25} explored antipodal shadings as well.

In the present work, we continue to study mesh patterns $p=(\pi,R)$ whose underlying permutation pattern $\pi$ is either monotone or equal to $132$. More precisely, the mesh patterns in Figure~\ref{mesh-patterns-pic} are of interest to us. Note that we shall abbreviate as $\rA:=\rA_2$ and $\rD:=\rD_2$.

\begin{figure}[h]
\begin{align*}
	& \rP_1=\pattern{scale=1}{3}{1/1,2/2,3/3}{0/0,0/1,2/0,2/1,2/2,2/3,3/0,3/1,3/3}\ ,\quad 
	\rP_2=\pattern{scale=1}{3}{1/1,2/3,3/2}{0/0,0/1,2/0,2/1,2/2,2/3,3/0,3/1,3/3}\ ,\quad
	\rP_3=\pattern{scale=1}{3}{1/1,2/2,3/3}{0/0,0/1,2/0,2/1,2/3,3/0,3/1,3/3}\ ,\quad 
	\rP_4=\pattern{scale=1}{3}{1/1,2/3,3/2}{0/0,0/1,2/0,2/1,2/3,3/0,3/1,3/3}\ ,\\
	& \rP_5=\pattern{scale=1}{3}{1/1,2/2,3/3}{1/0,1/1,2/0,2/1,2/2,2/3,3/0,3/1,3/3}\ ,\quad
	\rP_6=\pattern{scale=1}{3}{1/1,2/3,3/2}{1/0,1/1,2/0,2/1,2/2,2/3,3/0,3/1,3/3}\ ,\quad
	\rP_7=\pattern{scale=1}{3}{1/1,2/2,3/3}{1/0,1/1,2/0,2/1,2/3,3/0,3/1,3/3}\ ,\quad
	\rP_8=\pattern{scale=1}{3}{1/1,2/3,3/2}{1/0,1/1,2/0,2/1,2/3,3/0,3/1,3/3}\ ,\\
	& \rP_9=\pattern{scale=1}{3}{1/1,2/2,3/3}{0/1,0/2,1/1,1/2,3/0,3/1,3/2,3/3}\ ,\quad
	\rP_{10}=\pattern{scale=1}{3}{1/1,2/3,3/2}{0/1,0/2,1/1,1/2,3/0,3/1,3/2,3/3}\ , \quad
	\rP_{11}=\pattern{scale=1}{3}{1/1,2/2,3/3}{1/0,0/2,1/1,1/2,3/0,3/1,3/2,3/3}\ ,\quad
	\rP_{12}\pattern{scale=1}{3}{1/1,2/3,3/2}{1/0,0/2,1/1,1/2,3/0,3/1,3/2,3/3}\ , \\
	&
	\rP_{13}=\pattern{scale=1}{3}{1/1,2/2,3/3}{1/0,1/1,2/0,2/1,2/2,3/0,3/1,3/2}\ ,\quad
	\rP_{14}=\pattern{scale=1}{3}{1/1,2/3,3/2}{1/0,1/1,2/0,2/1,2/2,3/0,3/1,3/2}\ ,\quad
	\rA_k=\raisebox{0.6ex}{
		\begin{tikzpicture}[scale=0.3, baseline=(current bounding box.center)]
			\fill[gray!50] (0,4.5) rectangle +(1,1);
			\fill[gray!50] (4.5,0) rectangle +(1,1);
			\filldraw (1,1) circle (6pt);
			\filldraw (2,2) circle (6pt);
			\filldraw (4.5,4.5) circle (6pt);
			\node[anchor=south] at (3,-2) {\tiny{$k$ heavy dots}};
			\node[anchor=center, rotate=45] at (3.25,3.25) {$\boldsymbol{\ldots}$};
			\draw [decorate, decoration={brace, mirror}] (0,-0.3) -- (6,-0.3);
			\draw [line width=0.4pt](0.01,1.01) -- (5.49,1.01); 
			\draw [line width=0.4pt](0.01,1.99) -- (5.49,1.99);
			\draw [line width=0.4pt](1.01,0.01) -- (1.01,5.49);
			\draw [line width=0.4pt](1.99,0.01) -- (1.99,5.49);
			\draw [line width=0.4pt](4.49,0.01) -- (4.49,5.49);
			\draw [line width=0.4pt](0.01,4.49) -- (5.49,4.49);
	\end{tikzpicture}},\quad 
	\rD_k=\raisebox{0.6ex}{
		\begin{tikzpicture}[scale=0.3, baseline=(current bounding box.center)]
			\fill[gray!50] (0,0) rectangle +(1,1);
			\fill[gray!50] (4.5,4.5) rectangle +(1,1);
			\filldraw (1,4.5) circle (6pt);
			\filldraw (2,3.5) circle (6pt);
			\filldraw (4.5,1) circle (6pt);
			\node[anchor=south] at (3,-2) {\tiny{$k$ heavy dots}};
			\node[anchor=center, rotate=135] at (3.25,2.25) {$\boldsymbol{\ldots}$};
			\draw [decorate, decoration={brace, mirror}] (0,-0.3) -- (6,-0.3);
			\draw [line width=0.4pt](0.01,1.01) -- (5.49,1.01); 
			\draw [line width=0.4pt](0.01,3.51) -- (5.49,3.51);
			\draw [line width=0.4pt](1.01,0.01) -- (1.01,5.49);
			\draw [line width=0.4pt](1.99,0.01) -- (1.99,5.49);
			\draw [line width=0.4pt](4.49,0.01) -- (4.49,5.49);
			\draw [line width=0.4pt](0.01,4.49) -- (5.49,4.49);
	\end{tikzpicture}}.
\end{align*}
\caption{The mesh patterns of interest in this paper}\label{mesh-patterns-pic}
\end{figure}
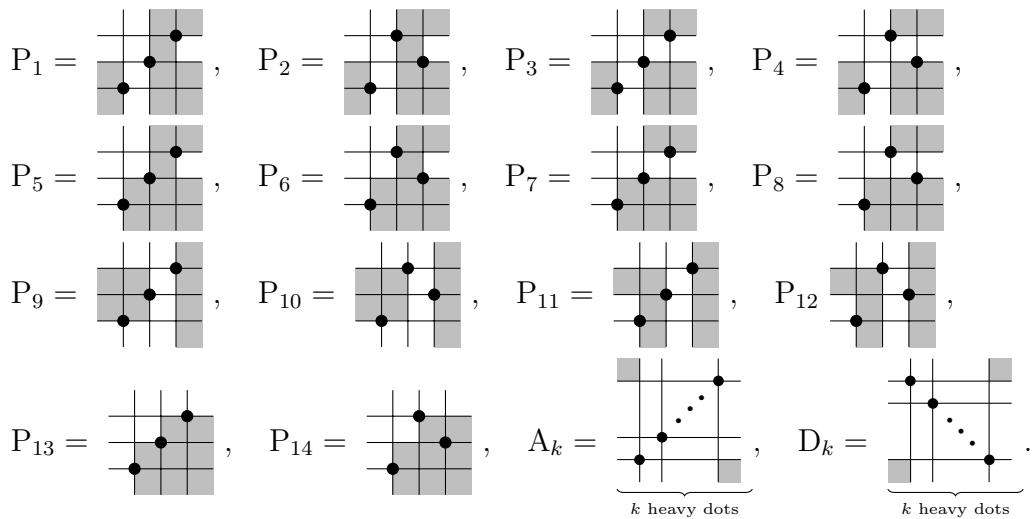

Our interest in the first 14 mesh patterns $\mathrm{P}_1$--$\mathrm{P}_{14}$ stems from the following seven joint equidistribution conjectures due to Lv and Zhang~\cite[Tab.~11]{LZ25}, where $k$ and $\ell$ are nonnegative integers. In this paper, we resolve all the conjectures, thereby completing the classification of joint equidistributions between the pairs of mesh patterns in question. A total of $112$ jointly equidistributed pairs had already been established in \cite{LZ25}, and we prove that all remaining speculative pairs are indeed jointly equidistributed. We also note that \cite{LZ25} raised further conjectures regarding equidistributions between different pairs of mesh patterns. For any pattern $\mathrm{P}$, we use $\mathrm{P}(\sigma)$ to denote the number of occurrences of $\mathrm{P}$ in a permutation $\sigma$.

\begin{conjecture}[{\cite[Conj.~1]{LZ25}}]\label{conj-1}
For $n\ge 1$ and any $k,\ell\geq 0$, we have
\begin{align*}
|\{\sigma\in\fS_n: \rP_1(\sigma)=k,~\rP_2(\sigma)=\ell\}| &= |\{\sigma\in\fS_n: \rP_2(\sigma)=k,~\rP_1(\sigma)=\ell\}|.
\end{align*}
\end{conjecture}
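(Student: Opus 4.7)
The plan is to build an explicit involution $\phi$ on $\fS_n$ satisfying
$$\rP_1(\phi(\sigma))=\rP_2(\sigma)\quad\text{and}\quad\rP_2(\phi(\sigma))=\rP_1(\sigma),$$
from which Conjecture~\ref{conj-1} follows at once.

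The first step is to decode the common shading $R$ of $\rP_1=(123,R)$ and $\rP_2=(132,R)$. The fully shaded column~$2$ forces the rightmost two dots of any occurrence to be consecutive, i.e., $i_3=i_2+1$. The shading of column~$3$ at rows $0,1,3$ dictates that every $\sigma_k$ with $k>i_3$ lies strictly between $\sigma_{i_2}$ and $\sigma_{i_3}$, while the shading of column~$0$ at rows $0,1$ makes $\sigma_{i_1}$ the leftmost entry of $\sigma$ with value below $\min(\sigma_{i_2},\sigma_{i_3})$. Hence an occurrence of $\rP_1$ or $\rP_2$ is completely determined by its consecutive ``core'' pair $(i_2,i_3)=(i,i+1)$, which I will call a \emph{valid pair}: one with $\min(\sigma_i,\sigma_{i+1})\ge 2$ and every $\sigma_k$ for $k>i+1$ belonging to the open interval $\bigl(\min(\sigma_i,\sigma_{i+1}),\max(\sigma_i,\sigma_{i+1})\bigr)$. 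These two conditions automatically furnish the witness $\sigma_{i_1}$, because the enclosure forces every value below $\min(\sigma_i,\sigma_{i+1})$---in particular the entry $1$---to sit at a position before~$i$. Each valid pair then contributes exactly one occurrence: of $\rP_1$ if $\sigma_i<\sigma_{i+1}$ (ascent) and of $\rP_2$ if $\sigma_i>\sigma_{i+1}$ (descent).

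Next, I define the involution. Let $\tau:[n]\to[n]$ be given by $\tau(1)=1$ and $\tau(v)=n+2-v$ for $v\in\{2,\ldots,n\}$, and put $\phi(\sigma):=\tau\circ\sigma$; equivalently, $\phi$ replaces each entry $\sigma_i\ne 1$ by $n+2-\sigma_i$ while keeping the entry $1$ in place. Since $\tau$ is an involution of $[n]$, so is $\phi$ on $\fS_n$. The verification that $\phi$ swaps the two statistics proceeds by showing that it preserves the set of valid pairs (as a subset of $\{1,\ldots,n-1\}$) and flips the ascent/descent type of each. The condition $\min\ge 2$ is preserved because $\tau$ fixes the entry~$1$. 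For the enclosure condition, $\tau|_{\{2,\ldots,n\}}$ is an order-reversing bijection carrying the open interval $(m,M)\subseteq[2,n]$ bijectively onto $(n+2-M,\,n+2-m)$, which is precisely the open interval spanned by the transformed pair; therefore $\sigma_k\in(m,M)$ for every $k>i+1$ if and only if the analogous statement holds in $\phi(\sigma)$. Finally, since both entries of a valid pair exceed~$1$, $\tau$ reverses their order, turning every ascent into a descent and vice versa. Combining these, $\rP_1(\phi(\sigma))=\rP_2(\sigma)$ and $\rP_2(\phi(\sigma))=\rP_1(\sigma)$, so $\phi$ yields the required bijection.

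The main obstacle is the first step---extracting the clean characterization of valid pairs from the seemingly intricate shading of~$R$. Once this reduction is in place, the complementation-style involution $\phi$ essentially suggests itself, and the verification reduces to a short interval calculation. The argument in fact yields slightly more than asked: the set of valid pairs in $\sigma$ is literally invariant under $\phi$, with only the ascent/descent label of each pair toggled, providing a pair-by-pair correspondence between $\rP_1$- and $\rP_2$-occurrences in $\sigma$ and $\phi(\sigma)$.
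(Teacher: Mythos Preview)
Your proof is correct. The characterization of occurrences via ``valid pairs'' matches the paper's Lemma~\ref{lem:pattern char} (specialized to $i=1,2$), and the verification that $\phi$ preserves the set of valid pairs while toggling ascent/descent is sound; in particular you correctly handle the subtlety that the value~$1$, being fixed by~$\tau$, never lies in the open interval $(n+2-M,n+2-m)$ since $n+2-M\ge 2$.

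Your involution, however, is genuinely different from the paper's. The paper decomposes $\sigma=\sigma'1\sigma''$ and sets $\Phi(\sigma)=\sigma'1\kappa(\sigma'')$, complementing only the values appearing \emph{after} the position of~$1$; your $\phi$ instead complements \emph{every} value except~$1$, regardless of position. On the example $\sigma=461928753$ the paper obtains $461293578$ while your map gives $751293468$. Both work because any valid pair necessarily lies entirely to the right of the entry~$1$, so whether one leaves the prefix alone or complements it too is immaterial for the pair-toggling. The trade-off is that the paper's $\Phi$ additionally preserves the statistic $\lrmin$ (since the prefix through~$1$ is untouched) and is packaged to prove the stronger Theorem~\ref{thm:5-tuple} covering Conjectures~\ref{conj-1}--\ref{conj-4} simultaneously with vector refinements; your $\phi$ does not preserve $\lrmin$ (in the example above $\lrmin$ changes from $2$ to $3$), but is a cleaner one-line map that suffices for Conjecture~\ref{conj-1} on its own and, by the same reasoning, would also establish Conjectures~\ref{conj-2}--\ref{conj-4}.
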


\begin{conjecture}[{\cite[Conj.~2]{LZ25}}]\label{conj-2}
For $n\ge 1$ and any $k,\ell\geq 0$, we have
\begin{align*}
|\{\sigma\in\fS_n: \rP_3(\sigma)=k,~\rP_4(\sigma)=\ell\}| &= |\{\sigma\in\fS_n: \rP_4(\sigma)=k,~\rP_3(\sigma)=\ell\}|.
\end{align*}
\end{conjecture}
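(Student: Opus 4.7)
The plan is to construct an explicit complementation-like involution $\phi:\fS_n\to\fS_n$ satisfying $\rP_3(\phi(\sigma))=\rP_4(\sigma)$ and $\rP_4(\phi(\sigma))=\rP_3(\sigma)$ for every $\sigma\in\fS_n$. Such a $\phi$ immediately bijects $\{\sigma:\rP_3(\sigma)=k,\ \rP_4(\sigma)=\ell\}$ with $\{\sigma:\rP_3(\sigma)=\ell,\ \rP_4(\sigma)=k\}$, proving the conjecture in full joint form.

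First I would unpack the shading constraints. Since $\rP_3$ and $\rP_4$ share \emph{the same} set of shaded cells, an occurrence at positions $i_1<i_2<i_3$ with dot values $a<b<c$ imposes identical conditions in both cases: no position $<i_1$ carries a value $<b$; every position strictly between $i_2$ and $i_3$ carries a value in $(b,c)$; and every position $>i_3$ carries a value in $(b,c)$. The sole distinguishing feature is that $\rP_3$ asks for $\sigma_{i_2}=b$ and $\sigma_{i_3}=c$, whereas $\rP_4$ asks for $\sigma_{i_2}=c$ and $\sigma_{i_3}=b$. In either case the tail $(\sigma_{i_2},\ldots,\sigma_n)$ has minimum $b$ and maximum $c$, and one is merely asking whether the minimum or the maximum of this tail occupies the leftmost tail position.

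Let $p$ denote the position of $1$ in $\sigma$. The above analysis collapses the counts to
\begin{align*}
\rP_3(\sigma) &= |\{i:p<i<n,\ i\text{ is a right-to-left minimum of }\sigma\}|,\\
\rP_4(\sigma) &= |\{i:p<i<n,\ i\text{ is a right-to-left maximum of }\sigma\}|,
\end{align*}
since the clause ``there exists a legitimate $i_1$'' forces some value strictly less than the tail minimum $b$ to appear in the prefix, which is equivalent to $p<i_2$ (the values $1,\ldots,b-1$ not appearing in the tail must all lie to its left). Now define $\phi(\sigma)$ by keeping the prefix $\sigma_1\sigma_2\cdots\sigma_p$ unchanged and replacing each suffix entry $\sigma_i$ (with $i>p$) by the element of $S:=\{\sigma_{p+1},\ldots,\sigma_n\}$ of complementary rank in $S$: if $\sigma_i$ is the $r$-th smallest element of $S$, then the new entry at position $i$ is the $(|S|-r+1)$-st smallest element of $S$. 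Clearly $\phi$ is an involution (double application restores each rank) and preserves $p$. Since set-complementation reverses the order on $S$, a right-to-left minimum of $\sigma$ at a position $i>p$ becomes a right-to-left maximum of $\phi(\sigma)$ at the same position (and vice versa), so $\phi$ swaps the two statistics above.

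The main obstacle lies in the translation step: reading off precisely how the eight shaded cells of $\rP_3$ and $\rP_4$ (spread across the three column blocks ``before $i_1$,'' ``between $i_2$ and $i_3$,'' and ``after $i_3$'') cut down the combinatorial possibilities, and consolidating them into the clean right-to-left extremum characterization keyed on the position of $1$. Once this reformulation is secured, the involutive property of $\phi$ and the resulting exchange of $(\rP_3,\rP_4)$ are immediate from the order-reversing nature of the set-complement.
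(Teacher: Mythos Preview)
Your proposal is correct and coincides with the paper's approach: your involution $\phi$ is exactly the paper's map $\Phi$ (fix the prefix through the position of $1$, complement the suffix on its own value set), and your reduction of $\rP_3(\sigma)$ and $\rP_4(\sigma)$ to the counts of right-to-left minima and maxima at positions strictly between $p$ and $n$ is equivalent to the paper's Lemma~\ref{lem:pattern char} specialized to $i=3,4$ (you additionally fold in the uniqueness of $i_3$ given $i_2$, which the paper leaves implicit in its ``active pair'' formulation). The paper packages the same argument into the stronger Theorem~\ref{thm:5-tuple}, but for Conjecture~\ref{conj-2} alone your write-up is the same proof.
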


\begin{conjecture}[{\cite[part of Conj.~3]{LZ25}}]\label{conj-3}
For $n\ge 1$ and and any $k,\ell\geq 0$, we have
\begin{align*}
|\{\sigma\in\fS_n: \rP_5(\sigma)=k,~\rP_6(\sigma)=\ell\}| &= |\{\sigma\in\fS_n: \rP_6(\sigma)=k,~\rP_5(\sigma)=\ell\}|.
\end{align*}
\end{conjecture}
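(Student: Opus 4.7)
The plan is to construct a complementation-type involution $\Phi:\fS_n\to\fS_n$ that interchanges the statistics $\rP_5$ and $\rP_6$, from which the joint symmetry follows immediately. First I would simplify the occurrence criteria: since column~$2$ is fully shaded in both patterns, any occurrence at positions $i_1<i_2<i_3$ has $i_3=i_2+1$. Writing $b:=i_2$ and unpacking the remaining shadings, position $b$ indexes a (necessarily unique) $\rP_5$-occurrence of $\sigma$ precisely when $\sigma_b<\sigma_{b+1}$, $\sigma_b$ is not a left-to-right minimum of $\sigma$, and every $\sigma_j$ with $j>b+1$ lies strictly between $\sigma_b$ and $\sigma_{b+1}$. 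The criterion for $\rP_6$ is obtained from this by swapping the roles of $\sigma_b$ and $\sigma_{b+1}$.

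Next, letting $p$ denote the position of the entry $1$ in $\sigma$, I would establish the key observation that every such occurrence satisfies $b>p$: each of the cases $p\in\{b,b+1\}$ contradicts the ascent/descent condition or the non-left-to-right-minimum condition, while $p>b+1$ would force $1\in(\min(\sigma_b,\sigma_{b+1}),\max(\sigma_b,\sigma_{b+1}))$, which is impossible since $\sigma_b,\sigma_{b+1}\ge 2$. Hence $\sigma_b,\sigma_{b+1}$, together with all entries at positions $>b+1$, lie in the value set $V:=\{\sigma_{p+1},\ldots,\sigma_n\}\subseteq\{2,\ldots,n\}$. I then define $\Phi(\sigma)$ by leaving $\sigma_1,\ldots,\sigma_p$ untouched and replacing each $\sigma_j$ with $j>p$ by $c(\sigma_j)$, where $c:V\to V$ is the unique order-reversing bijection on $V$. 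Since $c^2=\mathrm{id}_V$, the map $\Phi$ is an involution preserving both $p$ and $V$.

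The heart of the argument is verifying that, for every $b>p$, position $b$ supports a $\rP_5$-occurrence of $\sigma$ if and only if it supports a $\rP_6$-occurrence of $\Phi(\sigma)$. The ascent/descent condition flips because $c$ reverses order; the non-left-to-right-minimum condition in $\Phi(\sigma)$ is automatic from $\Phi(\sigma)_p=1$; and the interval condition transforms exactly as needed, because $c$ bijects $(\sigma_b,\sigma_{b+1})\cap V$ with $(c(\sigma_{b+1}),c(\sigma_b))\cap V$. Summing over $b$ then yields $\rP_5(\sigma)=\rP_6(\Phi(\sigma))$ and $\rP_6(\sigma)=\rP_5(\Phi(\sigma))$, which gives the desired joint symmetry. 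I expect the main obstacle to be the careful case-analysis needed to extract the clean three-condition criterion from the nine shaded cells of each pattern; once that translation is in place, the check that $\Phi$ swaps the two statistics is essentially mechanical, driven entirely by the order-reversing nature of $c$ together with the identity $\Phi(\sigma)_p=1$.
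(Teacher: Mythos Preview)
Your proposal is correct and uses exactly the same involution as the paper: decompose $\sigma=\sigma'1\sigma''$ and complement the suffix $\sigma''$ (your map $c$ on $V$ is precisely the paper's $\kappa$). The only difference is cosmetic---the paper phrases the occurrence criterion via right-to-left minima/maxima and packages the argument into a stronger vector-valued statement (Theorem~\ref{thm:5-tuple}) covering Conjectures~\ref{conj-1}--\ref{conj-4} simultaneously, whereas you treat $\rP_5,\rP_6$ directly---but the underlying bijection and verification are identical.
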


\begin{conjecture}[{\cite[part of Conj.~4]{LZ25}}]\label{conj-4}
For $n\ge 1$ and any $k,\ell\geq 0$, we have
\begin{align*}
|\{\sigma\in\fS_n: \rP_7(\sigma)=k,~\rP_8(\sigma)=\ell\}| &= |\{\sigma\in\fS_n: \rP_8(\sigma)=k,~\rP_7(\sigma)=\ell\}|.
\end{align*}
\end{conjecture}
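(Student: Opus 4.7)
The plan is to construct an explicit involution $\phi$ on $\fS_n$ satisfying $(\rP_7(\phi(\sigma)),\rP_8(\phi(\sigma)))=(\rP_8(\sigma),\rP_7(\sigma))$ for every $\sigma\in\fS_n$, which immediately yields Conjecture~\ref{conj-4}. Such a $\phi$ is ``complementation-like'' in the sense of the abstract, and one hopes to reuse it uniformly for Conjectures~\ref{conj-1}--\ref{conj-3}, since those four conjectures all pair a $123$-based and a $132$-based mesh pattern with identical shading.

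First I would decode the common shading of $\rP_7$ and $\rP_8$. A triple of positions $a<b<c$ is an occurrence of either pattern iff three conditions symmetric in $\{\sigma_b,\sigma_c\}$ hold: (i) $\sigma_a<\min(\sigma_b,\sigma_c)$; (ii) every entry strictly between positions $a$ and $b$ exceeds $\min(\sigma_b,\sigma_c)$; (iii) every entry strictly between $b$ and $c$, or strictly after $c$, lies strictly between $\sigma_b$ and $\sigma_c$. The pattern type is $\rP_7$ iff $\sigma_b<\sigma_c$, and $\rP_8$ otherwise. In particular $\sigma_b$ must be a right-to-left minimum (for $\rP_7$) or a right-to-left maximum (for $\rP_8$) of $\sigma$, while $\sigma_c$ is the opposite extremum of the suffix $\sigma_{b+1}\cdots\sigma_n$.

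Next, I would define $\phi(\sigma)$ as a value-complementation restricted to a canonical suffix of $\sigma$. Choose a position $m=m(\sigma)$ so that every $\rP_7/\rP_8$ occurrence in $\sigma$ has its middle position $b\ge m$, and set $\phi(\sigma)_i=\sigma_i$ for $i<m$ while $\phi(\sigma)_i=x+y-\sigma_i$ for $i\ge m$, where $[x,y]$ is the smallest interval containing $\{\sigma_m,\ldots,\sigma_n\}$ together with the values of candidate $\sigma_a$'s sitting before $m$. Since complementation within $[x,y]$ reverses the sign of $\sigma_b-\sigma_c$ and interchanges right-to-left minima and maxima inside the window, it toggles the pattern type; the symmetry of (i)--(iii) in $\{\sigma_b,\sigma_c\}$ then guarantees that the collection of valid triples is preserved, after a possible relabeling of the first position $a$. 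Involutivity $\phi\circ\phi=\mathrm{id}$ will follow once one shows that $m$ and $[x,y]$ are themselves invariant under the complementation.

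The main obstacle is the precise choice of $m$ and its interval $[x,y]$. Small cases illustrate the two competing pitfalls: the naive choice of the smallest $m$ for which the suffix values form an interval (for $\sigma=12345$ this would already force $m=1$) is too greedy and breaks the count swap, while a too restrictive choice may leave some $\rP_7/\rP_8$ occurrences uncomplemented. Calibrating $m$ correctly seems to require tying it to a distinguished structural marker of $\sigma$, e.g.\ the position immediately following the smallest ``active'' left-to-right minimum, or a specific right-to-left extremum pinned down by an explicit recursion. Verifying that the resulting $\phi$ is well-defined, involutive, and swaps the two statistics in every combinatorial situation allowed by the shading---in particular when several $\rP_7$ and $\rP_8$ occurrences overlap, sharing some of the positions $b,c$---is where the bulk of the bijective work will concentrate.
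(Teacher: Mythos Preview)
Your overall strategy is exactly the paper's: they too build a complementation-like involution on a suffix of $\sigma$ and use precisely the characterisation (i)--(iii) you wrote down (their Lemma~\ref{lem:pattern char}). Where you leave a gap is in the two concrete choices you flag as the ``main obstacle'', and in both cases the paper's answer is simpler than the options you are contemplating.

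First, the marker $m$: take it to be the position immediately to the right of the \emph{entry} $1$. No recursion or notion of ``active'' minimum is needed. The reason this works is that for any occurrence $(\sigma_a,\sigma_b,\sigma_c)$ of $\rP_7$ or $\rP_8$, your condition~(iii) forces $\min(\sigma_b,\sigma_c)>1$ to be a right-to-left minimum, hence $1$ lies strictly to the left of both $\sigma_b$ and $\sigma_c$; thus the pair $(\sigma_b,\sigma_c)$ always sits entirely in the suffix $\sigma''$ of the decomposition $\sigma=\sigma'\,1\,\sigma''$. Second, your affine map $\sigma_i\mapsto x+y-\sigma_i$ is the wrong implementation: the values occurring in $\sigma''$ need not form an interval (e.g.\ $\sigma=4\,6\,1\,9\,2\,8\,7\,5\,3$ has $\sigma''$ with value set $\{2,3,5,7,8,9\}$), so $x+y-\sigma_i$ will in general collide with values already used in $\sigma'$ and fail to produce a permutation. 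The fix is to use the \emph{relative} complement $\kappa$ on $\sigma''$ (reverse the order-isomorphism type while keeping the same value set). With $\Phi(\sigma)=\sigma'\,1\,\kappa(\sigma'')$, involutivity is immediate, the prefix $\sigma'1$ is fixed, and your observation that (i)--(iii) are symmetric in $\{\sigma_b,\sigma_c\}$ then cleanly shows that $\rP_7$-active pairs and $\rP_8$-active pairs are swapped, with the first entry $\sigma_a$ possibly relocated but still uniquely determined. This single $\Phi$ indeed handles Conjectures~\ref{conj-1}--\ref{conj-4} simultaneously, confirming the hope in your first paragraph.
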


\begin{conjecture}[{\cite[part of Conj.~4]{LZ25}}]\label{conj-5}
For $n\ge 1$ and any $k,\ell\geq 0$, we have
\begin{align*}
|\{\sigma\in\fS_n: \rP_9(\sigma)=k,~\rP_{10}(\sigma)=\ell\}| &= |\{\sigma\in\fS_n: \rP_{10}(\sigma)=k,~\rP_9(\sigma)=\ell\}|.
\end{align*}
\end{conjecture}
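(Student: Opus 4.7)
The plan is to prove Conjecture~\ref{conj-5} by constructing an involution $\phi:\fS_n\to\fS_n$ satisfying $(\rP_9,\rP_{10})(\phi(\sigma))=(\rP_{10},\rP_9)(\sigma)$ for every $\sigma\in\fS_n$, in line with the ``complementation-like'' strategy announced in the abstract. \emph{Step~1 (simplification of the statistics).} The fully shaded column~$3$ of both $\rP_9$ and $\rP_{10}$ forces the rightmost dot of any occurrence to sit at position~$n$; writing $z:=\sigma_n$ and analysing the shading in columns~$0,1$ at rows~$1,2$, an occurrence of $\rP_9$ is determined by a pair $(i_1,i_2)$ with $i_1<i_2<n$ in which $\sigma_{i_2}$ is a strict left-to-right maximum among the values less than~$z$ in $\sigma_1\cdots\sigma_{n-1}$ and $\sigma_{i_1}$ is the maximum of those values at positions strictly less than $i_2$. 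For $\rP_{10}$ the only change is that $\sigma_{i_2}$ must instead be a strict LTR minimum among values greater than~$z$. Writing $\iota(\sigma):=\min\{i<n:\sigma_i<z\}$, this yields
\[
\rP_9(\sigma)=\max\bigl(0,\,|\cN_<(\sigma)|-1\bigr),\qquad
\rP_{10}(\sigma)=\#\{i\in\cN_>(\sigma):i>\iota(\sigma)\},
\]
where $\cN_<(\sigma)$ (resp.\ $\cN_>(\sigma)$) is the set of positions $i<n$ at which $\sigma_i$ is a strict LTR maximum below~$z$ (resp.\ strict LTR minimum above~$z$) in $\sigma_1\cdots\sigma_{n-1}$.

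\emph{Step~2 (construction of $\phi$).} Encode $\sigma$ by its \emph{event word} $W(\sigma)\in\{M,m\}^{\ast}$, obtained by scanning the positions in $\cN_<(\sigma)\cup\cN_>(\sigma)$ from left to right and labelling them $M$ or $m$ according to type. Step~1 shows that $\rP_9(\sigma)$ equals the number of $M$'s strictly after the first $M$ in $W(\sigma)$, while $\rP_{10}(\sigma)$ equals the number of $m$'s strictly after the first $M$ in $W(\sigma)$; in particular neither statistic depends on the portion of $W(\sigma)$ up to and including its first $M$. The involution $\phi$ flips the labels $M\leftrightarrow m$ in precisely that suffix of $W(\sigma)$, and realizes the flipped event word by a canonical \emph{pivoted complementation}: the new last entry $\phi(\sigma)_n$ is chosen so that the cardinalities of the below/above slots match the flipped counts; the values at event positions are then the unique ones realizing the required new extremal labels (each surviving $M$-event picks up the next available value below $\phi(\sigma)_n$, each new $m$-event picks up the next available value above, both in a forced order); and the remaining non-event values are placed by a fixed order-reversing rule within each slot that makes the assignment its own inverse.

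\emph{Step~3 (verification) and the main obstacle.} Applying $\phi$ twice restores the original event word, since $M\leftrightarrow m$ is involutive and the value-reassignment rule is symmetric, so $\phi\circ\phi=\mathrm{id}$; the equidistribution then follows immediately from Step~1 because $W(\phi(\sigma))$ has exactly $\rP_{10}(\sigma)$ many $M$'s and $\rP_9(\sigma)$ many $m$'s after its first $M$. The principal obstacle lies in the precise specification of the pivoted complementation in Step~2: because the below- and above-$z$ slots generally have unequal cardinalities, flipping part of the event word forces $z$ itself to shift and triggers a non-trivial redistribution of values between the two slots. The non-event positions, whose colours (below or above the new pivot) must also be reassigned, are the delicate point; checking that the whole reassignment is simultaneously canonical (so $\phi$ is well-defined) and symmetric under the $M\leftrightarrow m$ flip (so $\phi$ is involutive) is the technical core of the argument.
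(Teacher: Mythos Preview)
Your Step~1 is essentially correct and matches the paper's own observations (items (4) and (5) in the proof of Theorem~\ref{thm:quadruple}): the occurrences of $\rP_9$ and $\rP_{10}$ are determined by their middle entries, which are exactly the left-to-right maxima below $z=\sigma_n$ (excluding the first one) and the left-to-right minima above $z$ occurring after position $\iota(\sigma)$, respectively.

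The genuine gap is Step~2. You describe what $\phi$ \emph{should} do to the event word, but you never actually define a map on permutations. Your ``pivoted complementation'' consists of three unspecified steps (choose a new last entry, place event values ``in a forced order'', place non-event values by ``a fixed order-reversing rule''), and you yourself flag the last of these as an unresolved ``principal obstacle'' and ``technical core''. As it stands there is no map, hence nothing to check for involutivity. What you are missing is that the flip can be realized by a single explicit value-reflection: with $\iota=\iota(\sigma)$, $a:=\sigma_\iota+1$, and $b:=\min\{\sigma_i:i<\iota\}-1$ (or $b:=n$ if $\iota=1$), set $\hat\sigma_i:=a+b-\sigma_i$ whenever $\sigma_i\in[a,b]$ and $\hat\sigma_i:=\sigma_i$ otherwise. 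The interval $[a,b]$ depends only on the prefix $\sigma_1\cdots\sigma_\iota$, all of whose values lie outside $[a,b]$, so the prefix is fixed, the interval is the same for $\hat\sigma$, and the map is manifestly involutive. All middle entries of $\rP_9$ and $\rP_{10}$ have values in $[a,b]$ and are swapped by the reflection, while entries at positions $>\iota$ with values outside $[a,b]$ are never events in either $\sigma$ or $\hat\sigma$; no separate treatment of non-event positions and no ad~hoc choice of $\phi(\sigma)_n$ is needed.
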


\begin{conjecture}[{\cite[part of Conj.~4]{LZ25}}]\label{conj-6}
	For $n\ge 1$ and and any $k,\ell\geq 0$, we have
	\begin{align*}
		|\{\sigma\in\fS_n: \rP_{11}(\sigma)=k,~\rP_{12}(\sigma)=\ell\}| &= |\{\sigma\in\fS_n: \rP_{12}(\sigma)=k,~\rP_{11}(\sigma)=\ell\}|.
	\end{align*}
\end{conjecture}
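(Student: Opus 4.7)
The goal is to construct an involution $\phi:\fS_n\to\fS_n$ satisfying $\rP_{11}(\phi(\sigma))=\rP_{12}(\sigma)$ and $\rP_{12}(\phi(\sigma))=\rP_{11}(\sigma)$ for every $\sigma\in\fS_n$, which immediately yields the claimed joint symmetric equidistribution.

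The first step is to unpack the shadings. The shading set $R=\{(0,2),(1,0),(1,1),(1,2),(3,0),(3,1),(3,2),(3,3)\}$ is identical for $\rP_{11}$ and $\rP_{12}$. Translating the cells into constraints on an occurrence at positions $a<b<c$ in $\sigma$, one obtains: (i)~$c=n$, because column~$3$ is entirely shaded; (ii)~$\sigma_a<\min(\sigma_b,\sigma_n)$; (iii)~every element at a position in $(a,b)$ exceeds $\max(\sigma_b,\sigma_n)$, from the cells $(1,0),(1,1),(1,2)$ in column~$1$; and (iv)~no value in the open interval bounded by $\sigma_b$ and $\sigma_n$ occupies a position in $[1,a-1]$, from $(0,2)$. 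The two patterns differ \emph{only} in which of $\sigma_b$ and $\sigma_n$ is the larger one: $\sigma_b<\sigma_n$ for $\rP_{11}$ (underlying $123$) and $\sigma_b>\sigma_n$ for $\rP_{12}$ (underlying $132$). Crucially, conditions (i)--(iv) are symmetric in the unordered pair $\{\sigma_b,\sigma_n\}$, so turning a $\rP_{11}$ occurrence at $(a,b,n)$ into a $\rP_{12}$ occurrence amounts to ``flipping'' the relative order of $\sigma_b$ and $\sigma_n$ while keeping (i)--(iv) in force.

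The second step is to promote this local flip to a global involution, which I expect to be one of the three ``complementation-like'' maps announced in the abstract. I would recode each $\sigma$ via suitable rank data relative to the last entry $\sigma_n$ (in the spirit of the Lehmer code), isolate the coordinates that encode the relative order of $\sigma_b$ and $\sigma_n$ across every triggering triple simultaneously, and apply a coordinate-level flip exchanging ``below $\sigma_n$'' with ``above $\sigma_n$'' in a rank-preserving way. Decoding back to $\fS_n$ then produces $\phi(\sigma)$. The involution property $\phi^2=\mathrm{id}$ is automatic at the coordinate level, and the statistic swap follows because (i)--(iv) are invariant under the flip while the binary information ``$\sigma_b$ vs.\ $\sigma_n$'' is inverted at every triggering triple.

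The principal obstacle is that a naive position-by-position swap of $\sigma_b$ with $\sigma_n$ is not a well-defined map on $\fS_n$: all triggering triples share the same anchor $\sigma_n$, so any single transposition alters the anchor and can spuriously create or destroy occurrences at the other triples. A small example already exposes the difficulty: $\sigma=1234\in\fS_4$ carries two $\rP_{11}$ occurrences, at $(1,2,4)$ and $(2,3,4)$, and one checks that its image under Conjecture~\ref{conj-6} must be $1432$, which is \emph{not} obtained from $1234$ by any single adjacent transposition. Hence the ``swap'' has to be carried out globally, through the canonical rank encoding sketched above. Designing this encoding, and verifying that the decoded permutation realizes the desired flipping of all occurrences at once, is where the technical heart of the argument lies.
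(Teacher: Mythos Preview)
Your decoding of the shading constraints (i)--(iv) and the observation that they are symmetric in the unordered pair $\{\sigma_b,\sigma_n\}$ are correct and match exactly the structural analysis the paper relies on. Your example $1234\mapsto 1432$ is also the image produced by the paper's involution. The genuine gap is that you never define the involution: ``recode via rank data in the spirit of the Lehmer code and apply a coordinate-level flip'' is a description of a hope, not a map. As you yourself note, the technical heart is precisely this construction, and the proposal stops short of it.

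The paper's missing ingredient is concrete and does \emph{not} go through any Lehmer-type encoding for this conjecture (the Lehmer code is used only for Conjecture~\ref{conj-7}). Instead one defines an \emph{active zone} of values $I(\sigma)=[a,b]$ as follows: let $j$ be the first position with $\sigma_j<\sigma_n$, set $a=\sigma_j+1$, and set $b=n$ if $j=1$ and $b=\min\{\sigma_i:i<j\}-1$ otherwise. The involution $\Psi$ simply complements values inside $[a,b]$, sending $v\mapsto a+b-v$, and fixes all other values. One checks that $I(\Psi(\sigma))=I(\sigma)$, so $\Psi$ is an involution, and that every admissible middle entry $\sigma_b$ lies in $I(\sigma)$, so the value-complementation flips the inequality $\sigma_b\lessgtr\sigma_n$ at every occurrence simultaneously while preserving conditions (i)--(iv). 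This realises exactly the ``global flip'' you were looking for, and on $1234$ it gives $I=[2,4]$ and image $1432$. The step you are missing is identifying $[a,b]$: a blanket reflection of all values about $\sigma_n$ would not in general stay inside $[1,n]$ nor preserve (iv), and that is why the active zone has to be carved out using the prefix up to the first entry below $\sigma_n$.
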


\begin{conjecture}[{\cite[part of Conj.~4]{LZ25}}]\label{conj-7}
	For $n\ge 1$ and and any $k,\ell\geq 0$, we have
	\begin{align*}
		|\{\sigma\in\fS_n: \rP_{13}(\sigma)=k,~\rP_{14}(\sigma)=\ell\}| &= |\{\sigma\in\fS_n: \rP_{14}(\sigma)=k,~\rP_{13}(\sigma)=\ell\}|.
	\end{align*}
\end{conjecture}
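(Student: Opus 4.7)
To prove Conjecture~\ref{conj-7}, the plan is to construct an explicit involution $\phi\colon\fS_n\to\fS_n$ satisfying $\rP_{13}(\phi(\sigma))=\rP_{14}(\sigma)$ and $\rP_{14}(\phi(\sigma))=\rP_{13}(\sigma)$, which immediately yields the stated joint symmetric equidistribution.

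The key preliminary step is a clean characterization of occurrences. Since $\rP_{13}$ and $\rP_{14}$ share the same shading, an unpacking of the mesh conditions shows that a triple $p_1<p_2<p_3$ witnesses an occurrence of either pattern in $\sigma$ precisely when $\{\sigma_{p_2},\sigma_{p_3}\}$ are the two smallest entries of $\sigma_{p_2}\sigma_{p_2+1}\cdots\sigma_n$, $\sigma_{p_1}<\min(\sigma_{p_2},\sigma_{p_3})$, and no entry strictly between positions $p_1$ and $p_2$ is smaller than $\min(\sigma_{p_2},\sigma_{p_3})$; the realised pattern is $\rP_{13}$ when $\sigma_{p_2}<\sigma_{p_3}$ and $\rP_{14}$ otherwise. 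Writing $l_j(\sigma)=|\{k>j\colon\sigma_k<\sigma_j\}|$ for the $j$-th Lehmer code entry and $q_1(\sigma)$ for the position of $1$ in $\sigma$, this translates into
\[
\rP_{13}(\sigma)=|\{j\colon q_1<j<n,\ l_j=0\}|,\qquad \rP_{14}(\sigma)=|\{j\colon q_1<j<n,\ l_j=1\}|,
\]
since $l_j=0$ (respectively $l_j=1$) is equivalent to $\sigma_j$ being the smallest (respectively second smallest) entry of $\sigma_j\cdots\sigma_n$, and the condition $j>q_1$ is precisely what guarantees the existence of a valid $p_1$.

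Given this, $\phi$ is defined on Lehmer codes: for every position $j$ with $q_1(\sigma)<j<n$ and $l_j(\sigma)\in\{0,1\}$, replace $l_j$ by $1-l_j$; leave all other entries unchanged. The result is still a valid Lehmer code because $n-j\ge 1$ for $j<n$. Since the entries $l_1,\ldots,l_{q_1}$ are untouched, the inductive construction of the permutation from its Lehmer code shows that the prefix $\sigma_1\cdots\sigma_{q_1}$, and in particular the position $q_1$ of the value $1$, is preserved. Hence the indexing set $\{j\colon q_1<j<n,\ l_j\in\{0,1\}\}$ is intrinsic and is itself preserved by the swap, so applying $\phi$ twice returns the original Lehmer code and $\phi^2=\mathrm{id}$.

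The exchange of counts is then immediate: by construction the number of zeros in the prescribed range after the swap equals the number of ones before, and vice versa, giving $\rP_{13}(\phi(\sigma))=\rP_{14}(\sigma)$ and $\rP_{14}(\phi(\sigma))=\rP_{13}(\sigma)$. The main work in this plan lies in the combinatorial characterization step---translating the corner shading of $\rP_{13}$ and $\rP_{14}$ into the ``two-smallest-of-the-suffix'' criterion---after which the Lehmer-code involution and its required properties are nearly immediate.
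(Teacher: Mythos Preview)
Your proposal is correct and is essentially the paper's own bijective proof: the paper encodes $\sigma$ by its Lehmer code $\bs=L(\sigma)$, identifies $\rP_{13}(\sigma)$ and $\rP_{14}(\sigma)$ with the sizes of $\cN_0(\bs)=\{j:s_j=0,\ \exists\, i<j<k\text{ with }s_i=s_k=0\}$ and $\cN_1(\bs)$ (defined analogously with $s_j=1$), and then swaps $0\leftrightarrow 1$ on $\cN_0(\bs)\cup\cN_1(\bs)$. Since the leftmost zero of $\bs$ sits at $q_1(\sigma)$ and the rightmost at $n$, your index set $\{j:q_1<j<n,\ l_j\in\{0,1\}\}$ coincides with $\cN_0(\bs)\cup\cN_1(\bs)$, so your $\phi$ is exactly the paper's $\theta$ (conjugated by $L$); the paper additionally supplies a second, algebraic proof via a recurrence for $F_n(s,t)=\sum_{\sigma}s^{\rP_{13}(\sigma)}t^{\rP_{14}(\sigma)}$ showing it is a polynomial in $s+t$.
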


Note that the shadings in none of the above seven patterns are symmetric, meaning that when we flip the picture of a pattern $\rP$ with respect to the diagonal $y=x$, we get a different mesh pattern which we denote as $\overline{\rP}$. It is worth noting that in Lv and Zhang's paper \cite{LZ25}, there are actually fourteen pairs of patterns conjectured to be jointly equidistributed. But proving the above seven conjectures is sufficient, since the validity of, say Conjecture~\ref{conj-1}, implies the truth of the following equidistribution via the operation of taking inverse $\sigma\mapsto\sigma^{-1}$:
\begin{align*}
|\{\sigma\in\fS_n: \overline{\rP}_1(\sigma)=k,~\overline{\rP}_2(\sigma)=\ell\}| &= |\{\sigma\in\fS_n: \overline{\rP}_2(\sigma)=k,~\overline{\rP}_1(\sigma)=\ell\}|.
\end{align*}
Here the patterns $\overline{\rP}_1$ and $\overline{\rP}_2$ are recorded together as the pair $Y_7^{(4)}$ in \cite[Tab.~11]{LZ25}.

Next, for our work related to the patterns $\rA_k$ and $\rD_k$, initially we intended to consider monotone mesh patterns with all four corners shaded as shown below:
\begin{align*}
\widetilde{\rA}_k=\raisebox{0.6ex}{
		\begin{tikzpicture}[scale=0.3, baseline=(current bounding box.center)]
			\fill[gray!50] (0,4.5) rectangle +(1,1);
			\fill[gray!50] (4.5,0) rectangle +(1,1);
			\fill[gray!50] (0,0) rectangle +(1,1);
			\fill[gray!50] (4.5,4.5) rectangle +(1,1);
			\filldraw (1,1) circle (6pt);
			\filldraw (2,2) circle (6pt);
			\filldraw (4.5,4.5) circle (6pt);
			\node[anchor=south] at (3,-2) {\tiny{$k$ heavy dots}};
			\node[anchor=center, rotate=45] at (3.25,3.25) {$\boldsymbol{\ldots}$};
			\draw [decorate, decoration={brace, mirror}] (0,-0.3) -- (6,-0.3);
			\draw [line width=0.4pt](0.01,1.01) -- (5.49,1.01); 
			\draw [line width=0.4pt](0.01,1.99) -- (5.49,1.99);
			\draw [line width=0.4pt](1.01,0.01) -- (1.01,5.49);
			\draw [line width=0.4pt](1.99,0.01) -- (1.99,5.49);
			\draw [line width=0.4pt](4.49,0.01) -- (4.49,5.49);
			\draw [line width=0.4pt](0.01,4.49) -- (5.49,4.49);
	\end{tikzpicture}},\quad 
\widetilde{\rD}_k=\raisebox{0.6ex}{
		\begin{tikzpicture}[scale=0.3, baseline=(current bounding box.center)]
			\fill[gray!50] (0,0) rectangle +(1,1);
			\fill[gray!50] (4.5,4.5) rectangle +(1,1);
			\fill[gray!50] (0,4.5) rectangle +(1,1);
			\fill[gray!50] (4.5,0) rectangle +(1,1);
			\filldraw (1,4.5) circle (6pt);
			\filldraw (2,3.5) circle (6pt);
			\filldraw (4.5,1) circle (6pt);
			\node[anchor=south] at (3,-2) {\tiny{$k$ heavy dots}};
			\node[anchor=center, rotate=135] at (3.25,2.25) {$\boldsymbol{\ldots}$};
			\draw [decorate, decoration={brace, mirror}] (0,-0.3) -- (6,-0.3);
			\draw [line width=0.4pt](0.01,1.01) -- (5.49,1.01); 
			\draw [line width=0.4pt](0.01,3.51) -- (5.49,3.51);
			\draw [line width=0.4pt](1.01,0.01) -- (1.01,5.49);
			\draw [line width=0.4pt](1.99,0.01) -- (1.99,5.49);
			\draw [line width=0.4pt](4.49,0.01) -- (4.49,5.49);
			\draw [line width=0.4pt](0.01,4.49) -- (5.49,4.49);
	\end{tikzpicture}}.
\end{align*}
The following observation shows that, from the perspective of pattern avoidance (though not from that of distribution), it suffices to consider $\rA_k$ and $\rD_k$ instead, where only the opposite pair of corners are shaded.

\begin{proposition}\label{prop:2 corners suffice}
For any integers $n\ge k>1$, we have
\begin{align}
\fS_n(\rA_k) &= \fS_n(\widetilde{\rA}_k),\label{eq:A=tildeA}\\
\fS_n(\rD_k) &= \fS_n(\widetilde{\rD}_k).\label{eq:D=tildeD}
\end{align}
\end{proposition}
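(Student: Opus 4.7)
The inclusion $\fS_n(\rA_k) \subseteq \fS_n(\widetilde{\rA}_k)$ is immediate from the definition of mesh-pattern occurrence: every occurrence of $\widetilde{\rA}_k$ is \emph{a fortiori} an occurrence of $\rA_k$, so avoidance of $\rA_k$ implies avoidance of $\widetilde{\rA}_k$. The plan for the reverse inclusion is to argue the contrapositive---given any occurrence of $\rA_k$ in $\sigma$, I will manufacture an occurrence of $\widetilde{\rA}_k$ by iteratively swapping the two endpoints of the chosen occurrence. Fix an $\rA_k$-occurrence at positions $i_1 < i_2 < \cdots < i_k$ with values $v_j := \sigma_{i_j}$, so that the top-left cell $(0,k)$ and bottom-right cell $(k,0)$ are empty; the goal is to upgrade this to an occurrence in which additionally the bottom-left cell $(0,0)$ and top-right cell $(k,k)$ are empty.

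The procedure has two stages. In stage one, while there exists $j < i_1$ with $\sigma_j < v_1$, replace $i_1$ by such a $j$. The new tuple $(j,i_2,\ldots,i_k)$ is still an increasing subsequence since $\sigma_j < v_1 < v_2 < \cdots < v_k$; the top-left emptiness persists because the new left boundary is to the left of the old one; and the bottom-right emptiness persists by contradiction---any $\sigma_l < \sigma_j$ with $l > i_k$ would satisfy $\sigma_l < v_1$, violating the original bottom-right condition. Stage one terminates because $i_1$ strictly decreases. In stage two, iterate the symmetric swap on the right end: while there exists $l > i_k$ with $\sigma_l > v_k$, replace $i_k$ by $l$. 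A symmetric verification shows that top-left and bottom-right remain empty, and crucially the bottom-left emptiness gained in stage one is preserved because $v_1$ and $i_1$ do not change. After both stages terminate, all four corner cells are empty, yielding the desired $\widetilde{\rA}_k$-occurrence and proving \eqref{eq:A=tildeA}.

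The identity \eqref{eq:D=tildeD} will then follow formally: since $\rD_k$ (resp.~$\widetilde{\rD}_k$) is the horizontal reflection of $\rA_k$ (resp.~$\widetilde{\rA}_k$) as a mesh pattern, and the involution $\sigma \mapsto \sigma^r$ on $\fS_n$ transports occurrences of any mesh pattern to occurrences of its reversal, \eqref{eq:A=tildeA} implies \eqref{eq:D=tildeD}. The main (but mild) obstacle is the bookkeeping needed to confirm that each swap preserves every corner condition already established; phrasing each verification uniformly as a one-line contradiction with a previous emptiness hypothesis should keep the argument short and symmetric.
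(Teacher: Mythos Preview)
Your proof is correct and follows essentially the same extremal-occurrence idea as the paper: the paper takes in one stroke an $\rA_k$-occurrence with maximal index span $|a_1-a_k|$ and argues that it must already be a $\widetilde{\rA}_k$-occurrence (else one could push an endpoint further out, contradicting maximality), while your two-stage iteration arrives at the same extremal occurrence step by step, with the same one-line checks that each push preserves the previously established corner conditions. Your deduction of \eqref{eq:D=tildeD} via reversal is equivalent to the paper's deduction via complementation.
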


For the reader's convenience, our enumerative results related to the avoidance of the patterns \( \rA_k \), \( \rD_k \), and 132 are summarized in Table~\ref{tab:Enum}.

The rest of the paper is organized as follows. Conjectures~\ref{conj-1}--\ref{conj-7} are proved in Section~\ref{sec:conj}, which is divided into three subsections. The first subsection features our first involution $\Phi$ which proves a stronger result (see Theorem~\ref{thm:5-tuple}) that implies simultaneously the first four Conjectures \ref{conj-1}--\ref{conj-4}. We construct our second involution $\Psi$ in the second subsection to confirm and strengthen Conjectures \ref{conj-5} and \ref{conj-6} (see Theorem~\ref{thm:quadruple}). The last Conjecture~\ref{conj-7} is proven in two ways in the third subsection: first through a third involution $\theta$ which utilizes the {\it Lehmer code} to make transitions between permutations and {\it subexceedant functions}, and then by a recurrence relation for the corresponding bivariate generating function. Next, in Section~\ref{sec:monotone}, we prove Proposition~\ref{prop:2 corners suffice} and enumerate two classes of permutations that simultaneously avoid two monotone mesh patterns. Then, in Section~\ref{sec:132 and one monotone}, we focus on $132$-avoiding permutations and additionally consider one monotone mesh pattern to be avoided, obtaining five more enumerative results. It is worth noting that in Sections~\ref{sec:monotone} and~\ref{sec:132 and one monotone}, we restrict ourselves to cases where the mesh patterns involved have shadings only in an opposing pair of corners (recall Proposition~\ref{prop:2 corners suffice}). We conclude the paper with an outlook on future research, presented in Section~\ref{sec:concluding_remarks}.

\begin{table}[ht!]
	\renewcommand{\arraystretch}{1.6}
	\centering
	\begin{tabular}{|c||c||c|}
		\hline
		$p_1,p_2$ & enumeration formula for $|\fS_n(p_1,p_2)|$ & Ref.\\
		\hline
		\hline
		$\rA,\rD$ & $0 ,\; n\geq 2$ & Prop \ref{thm:P1P2} \\
		\hline 
		$\rA,\rD_3$ & $n-1 ,\; n\geq 2$ & Thm \ref{thm:P1P4} \\
		\hline 
		$\rA_3,\rD_3$ & $2^{n-1} ,\; n\geq 1$ & Thm \ref{thm:P3P4} \\
		\hline 
		$132,\rA$ & $C_{n}-C_{n-1} ,\; n\geq 2$ & Thm \ref{thm:132P1} \\
		\hline 
		% $132,\rA_3$ & $C_{n}-C_{n-1}+1 ,\; n\geq 3$ & Thm \ref{thm:132P3} \\
		% \hline 
		$132,\rA_k$ & $C_{n}-C_{n-1}+|\fS_{n-1}(132,12\ldots(k-1))|,\; n\geq 1$ & Thm \ref{thm:132Ak} \\
		\hline 
		$132,\rD$ & $C_{n-1} ,\; n\geq 1$ & Thm \ref{thm:132D} \\
		\hline 
		$132,\rD_3$ & $C_{n-1}+n-1,\; n\geq 1$ & Thm \ref{thm:132D3} \\
		\hline 
		$132,\rD_4$ & $C_{n-1}+(n-1)(2n^2-7n+12)/6,\; n\geq 1$ & Thm \ref{thm:132D4} \\
		\hline 
	\end{tabular}
	\medskip
	\caption{Enumeration results for $(p_1,p_2)$-avoiding permutations}
	\label{tab:Enum}
\end{table}

%%%%%%%%%%%%%%%%%%%%%%%%%%%%%%%%%%%%
\section{Lv and Zhang's conjectures}\label{sec:conj}
%%%%%%%%%%%%%%%%%%%%%%%%%%%%%%%%%%%%

We break down the proofs of all seven conjectures into three subsections. For the first four conjectures, we construct an involution that is essentially a variant of the complementation map. As it turns out, this involution actually leads to an equidistribution result that is stronger than (and thus generalizes) the original conjectures (see Theorem~\ref{thm:5-tuple}).

\subsection{Generalizations of Conjectures~\ref{conj-1}--\ref{conj-4}}
We begin by introducing the following slightly more general version of the complementation map.
\begin{Def}\label{def:comp}
	Let $\alpha:=a_1a_2\cdots a_m$ be a sequence of distinct positive integers and let $b_1b_2\cdots b_m$ be its unique rearrangement that is monotonely increasing. Then the {\it complement} of $\alpha$, denoted $\kappa(\alpha)$, is defined to be the sequence $\kappa(\alpha):=\hat{a}_1\hat{a}_2\cdots\hat{a}_m$, where for $1\le i\le m$, $\hat{a}_i=b_{m+1-j}$ if $a_i=b_j$. For example, $\kappa(4189)=8941$.
\end{Def}

The following definition and its alternative characterization (Lemma~\ref{lem:pattern char}) are crucial in understanding the eight patterns $\rP_1$--$\rP_8$ involved in the four conjectures.
\begin{Def}\label{def:active pair}
Given a mesh pattern $\rP_i$, $1\le i\le 8$, and a permutation $\sigma\in\fS_n$, a pair of letters $(x,y)$ is called {\it $\rP_i$-active} if there exists a triple subsequence $(\sigma_a,\sigma_b,\sigma_c)$, $1\leq a<b<c\leq n$, constituting an occurrence of $\rP_i$ in $\sigma$ with $x=\sigma_b$ and $y=\sigma_c$. 
\end{Def}

\begin{lemma}\label{lem:pattern char}
Given a mesh pattern $\rP_i$, $1\le i\le 8$, and a permutation $\sigma\in\fS_n$, then $(x,y)$ is a $\rP_i$-active pair if and only if 
\begin{itemize}
	\item for $i=1,5$, $x=\sigma_j>1$ is a right-to-left minimum and $y=\sigma_{j+1}$ is a right-to-left maximum for a certain $j$;
	\item for $i=3,7$, $x=\sigma_j>1$ is a right-to-left minimum, $y=\sigma_{k}$ is a right-to-left maximum for certain $1\le j<k\le n$, and $\sigma_j<\sigma_{\ell}<\sigma_k$ for any $\ell$, $j<\ell<k$;
	\item for $i=2,6$, $x=\sigma_j$ is a right-to-left maximum and $y=\sigma_{j+1}>1$ is a right-to-left minimum for a certain $j$;
	\item for $i=4,8$, $x=\sigma_j$ is a right-to-left maximum, $y=\sigma_{k}>1$ is a right-to-left minimum for certain $1\le j<k\le n$, and $\sigma_k<\sigma_{\ell}<\sigma_j$ for any $\ell$, $j<\ell<k$.
\end{itemize}
Moreover, counting the number of occurrences of the $\rP_i$-pattern in $\sigma$ is equivalent to counting the number of $\rP_i$-active pairs. Namely, we have
\begin{align}
\label{id:pair=count-1}
&\text{For $i=1,2,5,6$, }\quad \rP_i(\sigma)=|\{j\in[n]:(\sigma_j,\sigma_{j+1}) \text{ is $\rP_i$-active }\}|.\\
&\text{For $i=3,4,7,8$, }\quad \rP_i(\sigma)=|\{(j,k)\in[n]\times[n]:(\sigma_j,\sigma_{k}) \text{ is $\rP_i$-active }\}|.\label{id:pair=count-2}
\end{align}
\end{lemma}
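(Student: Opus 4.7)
The plan is to translate each shaded cell of $\rP_1,\ldots,\rP_8$ into an explicit restriction on a hypothetical occurrence $(\sigma_a,\sigma_b,\sigma_c)$ at positions $a<b<c$, then read off both the characterization and the counting identity. The eight patterns sort along three binary dichotomies: the underlying permutation ($123$ for odd $i$ versus $132$ for even $i$), the shading in column $2$ (fully shaded for $i\in\{1,2,5,6\}$ versus one cell unshaded at mid-height for $i\in\{3,4,7,8\}$), and the location of the leftmost shaded column (column $0$ for $i\in[1,4]$ versus column $1$ for $i\in[5,8]$).

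First I would establish the forward direction, starting from an occurrence and extracting constraints on the pair $(\sigma_b,\sigma_c)$. The shading in column $2$ controls what can sit strictly between positions $b$ and $c$: full shading forces $c=b+1$, whereas leaving the mid-height cell open permits exactly those $\sigma_\ell$ in the corresponding value strip, which yields the ``sandwich'' clause for $i\in\{3,4,7,8\}$. The shading in column $3$ at rows $0,1,3$ then forces every entry to the right of position $c$ to lie in the single unshaded row: for underlying $123$ this row is $(\sigma_b,\sigma_c)$, so $\sigma_c$ is a right-to-left maximum and $\sigma_b>1$ is a right-to-left minimum; for underlying $132$ this row is $(\sigma_c,\sigma_b)$, so $\sigma_b$ is a right-to-left maximum and $\sigma_c>1$ is a right-to-left minimum. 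The strict inequality ``$>1$'' comes for free from $\sigma_a<\sigma_b$ (respectively $\sigma_a<\sigma_c$).

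Next I would handle the converse together with the counting identities. Given a pair $(x,y)$ as in the lemma, the extra shading in the leftmost involved column pins down $\sigma_a$ uniquely. For $\rP_1,\rP_3$ the shading at $(0,0)$ and $(0,1)$ forbids values below $x=\sigma_b$ from appearing strictly before $\sigma_a$, so $\sigma_a$ must be the \emph{first} entry of $\sigma$ with value $<x$; for $\rP_5,\rP_7$ the shading at $(1,0)$ and $(1,1)$ instead forbids such values in the gap between $\sigma_a$ and $\sigma_b$, so $\sigma_a$ must be the \emph{last} entry below $x$ in the prefix $\sigma_1\cdots\sigma_{b-1}$. The analogous rule with $y=\sigma_c$ in place of $x$ handles $\rP_2,\rP_4,\rP_6,\rP_8$. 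The required entry exists because $x>1$ (or $y>1$), and the right-to-left minimum property places every smaller value in the appropriate prefix, so the choice of $\sigma_a$ is well-defined; a direct check confirms that the resulting triple $(\sigma_a,\sigma_b,\sigma_c)$ is indeed an occurrence. Uniqueness gives a bijection between active pairs and occurrences, proving \eqref{id:pair=count-1} and \eqref{id:pair=count-2}.

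The real difficulty is organizational rather than conceptual: eight patterns with nine shaded cells each invite bookkeeping errors. My strategy to keep the exposition compact is to treat one pattern (say $\rP_1$) in full detail, and for each of the remaining seven patterns to flag only the shaded cells that have changed and the single clause of the characterization or unique-choice rule for $\sigma_a$ that this change modifies.
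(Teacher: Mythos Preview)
Your proposal is correct and follows essentially the same approach as the paper: extract the conditions on $(x,y)$ from the column-$2$ and column-$3$ shadings, then use the leftmost shaded column to pin down a unique first entry $\sigma_a$, giving the bijection between occurrences and active pairs. The paper is terser---it writes out only $i=3$ and $i=8$ and leaves the rest as analogous---whereas you organize the eight cases via three binary dichotomies and plan to spell out $\rP_1$; this is a presentational difference, not a mathematical one.
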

\begin{proof}
We shall give proof details for only two cases $i=3$ and $i=8$, since all remaining cases could be proven analogously. Note that the cases $i=1,2,5,6$ have the added requirement that the two entries $x$ and $y$ in the pair must be adjacent, hence could be viewed as special cases of $i=3,4,7,8$, respectively.

Now suppose $i=3$ and we focus on pattern $\rP_3$. It is clear that $x=\sigma_j>1$ being a right-to-left minimum, $y=\sigma_k$ being a right-to-left maximum, and $\sigma_j<\sigma_\ell<\sigma_k$ for every $\ell$, $j<\ell<k$, are necessary conditions for $(x,y)$ to be $\rP_3$-active. To show these conditions are also sufficient and to establish \eqref{id:pair=count-2}, it suffices to show that for such a pair $(x,y)$ (which satisfies the aforementioned conditions), there exists a unique entry $z$, such that $(z,x,y)$ becomes a $\rP_3$-pattern in $\sigma$, making $(x,y)$ a $\rP_3$-active pair. Indeed, we define $z$ to be the leftmost entry that is smaller than $x$. Such a $z$ must exist since the conditions on $(x,y)$ force letter $1$ to be on the left of $x$, so the set $\{s\in[n]: s<j \text{ and } \sigma_s<\sigma_j=x\}$ is nonempty.

The case for $i=8$ can be explained along the same lines, except that the uniquely found $z$ which makes $(x,y)$ a $\rP_8$-active pair is now defined to be the rightmost (but still to the left of $\sigma_j$) entry that is smaller than $y=\sigma_k>1$.
\end{proof}

Comparing the two patterns $\rP_1$ and $\rP_3$, one notices that the only difference is whether or not the cell between the second and the third heavy dots is shaded. When this cell is shaded as in $\rP_1$, the last two entries (i.e., the $\rP_1$-active pair) must be adjacent in any occurrence of $\rP_1$. When this cell is not shaded as in $\rP_3$, then there could be any number of letters inbetween the $\rP_3$-active pair. The same observation can be made for the pattern pairs $(\rP_2,\rP_4)$, $(\rP_5,\rP_7)$, and $(\rP_6,\rP_8)$. The discussion above motivates the following refined notion of mesh pattern.

\begin{Def}
Given a permutation $\sigma\in\fS_n$ and a mesh pattern $\rP_i$, $i=3,4,7,8$, a triple $(\sigma_a,\sigma_b,\sigma_c)$, $1\le a < b < c \le n$, is said to form a $k$-refined $\rP_i$-pattern of $\sigma$, denoted as $\rP_{i,k}$, if $(\sigma_a,\sigma_b,\sigma_c)$ is a $\rP_i$ pattern and $c-b=k+1$. Introduce the $\rP_i$-pattern vector enumerator as
\begin{align*}
\bPI(\sigma) &:= (\PGI{0}(\sigma),\PGI{1}(\sigma),\ldots,\PGI{n-3}(\sigma)).
\end{align*}
\end{Def}

The involution we are going to construct actually proves the following much stronger result. We abbreviate the $m$-tuple of statistics as $(\st_1,\ldots,\st_m) \sigma:=(\st_1(\sigma),\ldots,\st_m(\sigma))$, where some of the statistics are allowed to be vector-valued. Recall the {\em Stirling statistic} $\lrmin(\sigma)$, which denotes the number of left-to-right minima in $\sigma$.

\begin{theorem}\label{thm:5-tuple}
For $n\ge 1$, the following quintuple identity holds for any permutation $\sigma\in\fS_n$ and its image $\hat{\sigma}:=\Phi(\sigma)$.
\begin{align}
\label{id:5-tuple}
(\lrmin,\mathbf{P_3},\mathbf{P_4},\mathbf{P_7},\mathbf{P_8}) \sigma &=(\lrmin,\mathbf{P_4},\mathbf{P_3},\mathbf{P_8},\mathbf{P_7}) \hat{\sigma}.
\end{align}
\end{theorem}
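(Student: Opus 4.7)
The plan is to take $\Phi$ to be a localized variant of complementation: if $p$ denotes the position of the value $1$ in $\sigma$, set
\[
\Phi(\sigma):=\sigma_1\sigma_2\cdots\sigma_p\cdot\kappa(\sigma_{p+1}\sigma_{p+2}\cdots\sigma_n),
\]
where $\kappa$ is the complementation operator from Definition~\ref{def:comp}, acting on the suffix to the right of $1$. Since $\kappa$ is an involution that preserves the multiset of values it acts on, $\Phi$ is an involution on $\fS_n$ and the position $p$ is fixed. The $\lrmin$ statistic is preserved as well: every left-to-right minimum of $\sigma$ sits at or to the left of position $p$ (because $1$ is the smallest value and everything to its right exceeds $1$), and the prefix $\sigma_1\cdots\sigma_p$ is left untouched by $\Phi$.

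The crucial step is a \emph{location lemma}: for each $i\in\{3,4,7,8\}$ and each $\mathrm{P}_i$-active pair $(\sigma_b,\sigma_c)$ of $\sigma$ in the sense of Definition~\ref{def:active pair}, we must have $p<b<c$. Indeed, by Lemma~\ref{lem:pattern char}, one of $\sigma_b,\sigma_c$ is a right-to-left minimum strictly exceeding $1$, which already forces $1$ to sit to the left of that entry; moreover, $1$ cannot occupy a position strictly between $b$ and $c$, since this would violate the strict value-interval condition ($\sigma_b<\sigma_\ell<\sigma_c$ for $i=3,7$, or $\sigma_c<\sigma_\ell<\sigma_b$ for $i=4,8$), as $1$ is too small to lie in either interval. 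Consequently every active pair lives entirely inside the suffix $\tau:=\sigma_{p+1}\cdots\sigma_n$.

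Once the active pairs are confined to $\tau$, the main bijection is essentially automatic. Complementation $\kappa$ reverses every order relation on the value set of $\tau$, so it interchanges right-to-left minima with right-to-left maxima within $\tau$, and it converts the value-interval conditions defining $\mathrm{P}_3,\mathrm{P}_7$ into those defining $\mathrm{P}_4,\mathrm{P}_8$ and vice versa. Because $\kappa$ fixes positions, the induced bijection between $\mathrm{P}_3$-active pairs of $\sigma$ and $\mathrm{P}_4$-active pairs of $\hat{\sigma}$ preserves the gap $c-b=k+1$, and the analogous statement holds for $\mathrm{P}_7$ and $\mathrm{P}_8$. Invoking Lemma~\ref{lem:pattern char} once more to translate active-pair counts into pattern-occurrence counts delivers the swaps $\mathbf{P_3}\leftrightarrow\mathbf{P_4}$ and $\mathbf{P_7}\leftrightarrow\mathbf{P_8}$ at the refined vector level, which together with the preservation of $\lrmin$ yields the quintuple identity~\eqref{id:5-tuple}.

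The step I expect to demand the most care is the location lemma, namely the case-by-case verification that the value $1$ cannot appear within or to the right of the positions occupied by an active pair; writing it cleanly across all four patterns requires keeping track of which of $\sigma_b,\sigma_c$ plays the RL-minimum role in each case. Once that is in place, the rest of the argument is nearly tautological, relying only on the order-reversing nature of $\kappa$ and on the fact that our refinement parameter is defined purely in terms of positions, which $\Phi$ does not disturb.
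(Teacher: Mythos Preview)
Your proposal is correct and follows essentially the same route as the paper: the involution $\Phi$ you define is exactly the paper's $\Phi$, your ``location lemma'' is the paper's claim~(1), and the swap of active pairs via the order-reversing property of $\kappa$ mirrors the paper's argument verbatim. Your write-up is in fact slightly more explicit than the paper's (which leaves the verification of claim~(1) to the reader), and your observation that the refinement parameter $c-b$ is purely positional and hence $\Phi$-invariant is exactly what drives the vector-level conclusion.
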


\begin{remark}
For $i=3,4,7,8$, we see that the pattern $\rP_{i,0}$ is precisely the pattern $\rP_{i-2}$, and that for any permutation $\sigma\in\fS_n$, we have
$$\sum_{k=0}^{n-3}\PGI{k}(\sigma)=\rP_i(\sigma).$$
Consequently, Theorem~\ref{thm:5-tuple} immediately confirms the first four Conjectures \ref{conj-1}--\ref{conj-4}.
\end{remark}

\begin{proof}[Proof of Theorem~\ref{thm:5-tuple}]
We construct an involution $\Phi:=\Phi_n$ over $\fS_n$ which proves \eqref{id:5-tuple}. Suppose a permutation $\sigma \in \fS_n$ decomposes as $\sigma=\sigma'1\sigma''$, where juxtaposition means concatenation. Its image is taken to be $\Phi(\sigma)=\hat{\sigma}:=\sigma'1\kappa(\sigma'')$, which is still a permutation in $\fS_n$ since the complementation $\kappa$ (see Definition~\ref{def:comp}) only rearranges the letters contained in $\sigma''$. 

First note that the involution $\Phi$ preserves the prefix of $\sigma$ that ends with $1$, so in particular, the number of left-to-right minima of $\sigma$ is preserved as well. Next, we focus on the quadruple of vector-valued pattern statistics, $(\mathbf{P_3},\mathbf{P_4},\mathbf{P_7},\mathbf{P_8})$. The key point is that upon taking the complementation map $\kappa$, a right-to-left maximum (resp., minimum) becomes a right-to-left minimum (resp., maximum). More precisely, take one occurrence (if any) of pattern $\rP_i$ ($i=3,4,7,8$), say $(\sigma_a,\sigma_b,\sigma_c)$, in $\sigma$, we can make the following claims whose verifications in view of Lemma~\ref{lem:pattern char} are straightforward and left to the reader.
\begin{enumerate}
	\item Both $\sigma_b$ and $\sigma_c$ belong to $\sigma''$, and the image pair $(\hat{\sigma}_b,\hat{\sigma}_c)$ is either $\rP_{i+1}$-active in $\hat{\sigma}$ for $i=3,7$, or $\rP_{i-1}$-active in $\hat{\sigma}$ for $i=4,8$.
  \item $\Phi$ as defined above is an involution on $\fS_n$.
\end{enumerate}

A direct consequence of the claim (1) above is that $(\sigma_a,\sigma_b,\sigma_c)$ forms a refined $\rP_{3,k}$ (resp., $\rP_{4,k}$, $\rP_{7,k}$, $\rP_{8,k}$) pattern in $\sigma$ if and only if $(\hat{\sigma}_{a'},\hat{\sigma}_b,\hat{\sigma}_c)$ forms a $\rP_{4,k}$ (resp., $\rP_{3,k}$, $\rP_{8,k}$, $\rP_{7,k}$) pattern in $\hat{\sigma}$, for every possible value of $k$. Here the existence and uniqueness of $\hat{\sigma}_{a'}$ is guaranteed as shown in the proof of Lemma~\ref{lem:pattern char}. In other words, we have $\mathbf{P_3}(\sigma)=\mathbf{P_4}(\hat{\sigma})$ (resp., $\mathbf{P_4}(\sigma)=\mathbf{P_3}(\hat{\sigma})$, $\mathbf{P_7}(\sigma)=\mathbf{P_8}(\hat{\sigma})$, $\mathbf{P_8}(\sigma)=\mathbf{P_7}(\hat{\sigma})$), as desired.
\end{proof}

\begin{example}\label{p1p2exa}
The following pair of permutations is linked by our involution $\Phi$, and with it one can verify Theorem~\ref{thm:5-tuple}, as well as Conjectures \ref{conj-1}--\ref{conj-4}.
	\begin{align*}
		&\sigma =4 ~6 ~1 ~9 ~2 ~8 ~7 ~5 ~3\quad \stackrel{\Phi}{\longmapsto}\quad \hat{\sigma} =4 ~6 ~1 ~2 ~9 ~3 ~5 ~7 ~8,
	\end{align*}
	\begin{align*}
		&\text{in $\sigma$,}~\rP_{1}=\{128\},~\rP_{2}=\{192,153\},~\rP_{3}=\{128\},~\rP_{4}=\{192,183,173,153\},\\
		&\text{in $\hat{\sigma}$,}~\rP_{2}=\{193\},~\rP_{1}=\{129,478\},~\rP_{4}=\{193\},~\rP_{3}=\{129,138,458,478\},\\
		&\text{in $\sigma$,}~\rP_{5}=\{128\},~\rP_{6}=\{192,253\},~\rP_{7}=\{128\},~\rP_{8}=\{192,283,273,253\},\\
		&\text{in $\hat{\sigma}$,}~\rP_{6}=\{293\},~\rP_{5}=\{129,578\},~\rP_{8}=\{293\},~\rP_{7}=\{129,238,358,578\}.
	\end{align*}
\end{example}

\subsection{Proofs of Conjectures \ref{conj-5} and \ref{conj-6}}
Recall that Conjectures \ref{conj-5} and \ref{conj-6} concern the following four mesh patterns:
\begin{align*}
\rP_{9}=\pattern{scale=1}{3}{1/1,2/2,3/3}{0/1,0/2,1/1,1/2,3/0,3/1,3/2,3/3}\ ,\quad \rP_{10}=\pattern{scale=1}{3}{1/1,2/3,3/2}{0/1,0/2,1/1,1/2,3/0,3/1,3/2,3/3}\ ,\quad \rP_{11} =\pattern{scale=1}{3}{1/1,2/2,3/3}{1/0,0/2,1/1,1/2,3/0,3/1,3/2,3/3}\ ,\quad \rP_{12}=\pattern{scale=1}{3}{1/1,2/3,3/2}{1/0,0/2,1/1,1/2,3/0,3/1,3/2,3/3}\ .
\end{align*}
The shadings in these four patterns all contain the last column. As a result, if $(x,y,z)$ is an occurrence of one of the above four patterns in a given permutation $\sigma\in\fS_n$, then we must have $z=\sigma_n$. Moreover, once $y$ is given, the choice for $x$ is unique. In other words, counting the number of occurrences of these four patterns is equivalent to counting the elligible $y$'s. 

To show the joint symmetric equidistributions of $(\rP_9,\rP_{10})$ and $(\rP_{11},\rP_{12})$, respectively, the main idea is then to swap those $y$'s contained in $\rP_9$-patterns (resp., $\rP_{11}$-patterns) with those $y$'s contained in $\rP_{10}$-patterns (resp., $\rP_{12}$-patterns), by performing certain kind of ``local complementation''. We need the following notion of ``active zone'' to make the above discussion precise.

\begin{Def}
Let $\sigma\in\fS_n$ be a given permutation, we define its {\it active zone}, denoted as $I(\sigma)$, according to its last letter $\sigma_n$. If $\sigma_n=1$ then we set $I(\sigma):=\varnothing$, the empty set; otherwise we define $I(\sigma):=[a,b]$, where $a$ and $b$ are given as follows. 
\begin{itemize}
	\item Let $a:=\sigma_j+1$, where $j:=\min\{i:\sigma_i<\sigma_n\}$.
	\item If $j=1$ then let $b:=n$, otherwise let $b:=\min\{\sigma_i:i<j\}-1$.
\end{itemize}
Note that by definition, $a\le\sigma_n\le b$, so in this case the active zone $I(\sigma)$ is a nonempty interval.
\end{Def}

We are now in a position to define our second involution $\Psi=\Psi_n$ over $\fS_n$, which could be viewed as a complementation restricted to the active zone. Take a permutation $\sigma\in\fS_n$ and suppose its active zone is given by $I(\sigma)=[a,b]$ with $a=\sigma_j+1$. The image of $\sigma$ under $\Psi$, say $\hat{\sigma}=\Psi(\sigma)$, is constructed as follows. For $1\le i\le n$,
\begin{align*}
\hat{\sigma}_i &:=
	\begin{cases}
		\sigma_i & \text{if} \ \sigma_i\notin I(\sigma),\\
		a+b-\sigma_i & \text{if} \ \sigma_i\in I(\sigma).
	\end{cases}
\end{align*}

For the special case when $\sigma_n=1$, we see $I(\sigma)=\varnothing$ and $\hat{\sigma}=\sigma$. While for cases with $\sigma_n>1$, $\hat{\sigma}$ has the same prefix $\hat{\sigma}_1\cdots\hat{\sigma}_j=\sigma_1\cdots\sigma_j$, so we see that $I(\hat{\sigma})=[a,b]=I(\sigma)$, which implies that $\Psi(\Psi(\sigma))=\sigma$. Hence $\Psi$ as defined above is indeed an involution. We shall apply it in the proof the following equidistribution between two quadruples of pattern statistics, which is clearly stronger than the original Conjectures \ref{conj-5} and \ref{conj-6}.

\begin{theorem}\label{thm:quadruple}
For $n\ge 1$, the following identity holds for any permutation $\sigma\in\fS_n$ and its image $\hat{\sigma}:=\Psi(\sigma)$.
\begin{align}\label{id:quadruple}
(\rP_9,\rP_{10},\rP_{11},\rP_{12})\sigma &= (\rP_{10},\rP_{9},\rP_{12},\rP_{11})\hat{\sigma}.
\end{align}
\end{theorem}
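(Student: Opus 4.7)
My plan is to verify directly that $\Psi$ exchanges the $\rP_9$-occurrences of $\sigma$ with the $\rP_{10}$-occurrences of $\hat\sigma$, and the $\rP_{11}$-occurrences of $\sigma$ with the $\rP_{12}$-occurrences of $\hat\sigma$. The starting point is the observation already recorded in the text: since the last column of each of these four patterns is entirely shaded, any occurrence $(x,y,z)$ must satisfy $z=\sigma_n$, and the remaining shadings force $x$ to be uniquely determined by $y$, so counting $\rP_i(\sigma)$ reduces to counting the eligible middle letters $y$. My first step is to translate the shadings into explicit descriptions of eligibility. Writing $j=\min\{i:\sigma_i<\sigma_n\}$, these can all be phrased as ``$y=\sigma_{b'}$ with $b'>j$ and $y$ on the appropriate side of $\sigma_n$, subject to certain entries in $[1,b'-1]$ being absent from a forbidden value range.'' The difference between the corners $(0,1)$ and $(1,0)$ gives two slightly different forms of this forbidden-range condition, accounting for $\rP_9/\rP_{10}$ versus $\rP_{11}/\rP_{12}$.

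The second step is to confine the eligible $y$'s to the active zone $I(\sigma)=[L,U]$. For $\rP_9$- or $\rP_{11}$-eligible $y$ (so $y<\sigma_n$), the lower bound $y>\sigma_j=L-1$ follows because $\sigma_j$ lies to the left of $b'$ and forces the unique $x$ to satisfy $x\ge\sigma_j$. For $\rP_{10}$- or $\rP_{12}$-eligible $y$ (so $y>\sigma_n$), the upper bound $y\le U$ is forced: if $y>U$, then the pre-$j$ entry achieving $\min\{\sigma_i:i<j\}=U+1$ would either land in the forbidden value interval $(\sigma_n,y)$ or play the role of $x$ and violate $x<\sigma_n$. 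I would then check that $\Psi$ preserves the active-zone data: since $\sigma_j=L-1\notin I(\sigma)$ and every $\sigma_i$ with $i<j$ satisfies $\sigma_i\ge U+1$, the whole prefix $\sigma_1\cdots\sigma_j$ is untouched by the complementation, so $I(\hat\sigma)=I(\sigma)$, $\hat j=j$, and $\hat\sigma_n=L+U-\sigma_n$ still sits inside the zone.

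The third and main step is to verify that the value reflection $v\mapsto L+U-v$ transfers each eligibility condition on $\sigma$ to its conjugate on $\hat\sigma$. The inequality $y<\sigma_n$ is swapped with $\hat y>\hat\sigma_n$; the forbidden value intervals $(y,\sigma_n)$ and $(\sigma_n,y)$ lie entirely inside $I(\sigma)$ and are interchanged by the reflection; and any entry outside $I(\sigma)$, being either $<L$ or $>U$, is fixed by $\Psi$ and cannot land in these intervals, so the ``no entry in a specified positional range with value in the forbidden interval'' clauses transfer cleanly. The bookkeeping I expect to be the main obstacle concerns the position $a$ in the eligibility criterion: it is defined as $\max\{i<b':\sigma_i<\sigma_n\}$ for $\rP_{11}$ but (after unpacking the shadings) as $\max\{i<b':\sigma_i<y\}$ for $\rP_{12}$, and one must verify that these two notions are correctly exchanged by $\Psi$, exploiting that the sets $\{i<b':\sigma_i<\sigma_n\}$ in $\sigma$ and $\{i<b':\hat\sigma_i<\hat y\}$ in $\hat\sigma$ coincide whenever $b'>j$. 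Once this is checked for both pattern pairs, the involutivity of $\Psi$ immediately yields the reverse bijections and hence the identity~\eqref{id:quadruple}.
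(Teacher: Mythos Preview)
Your overall plan matches the paper's proof: both reduce the four pattern counts to the sets of eligible middle letters $y$, confine those letters to the active zone $I(\sigma)$, check that $\Psi$ fixes the prefix up through position $j$ (hence preserves $I(\sigma)$ and makes $\Psi$ an involution), and then verify that the zone-restricted complementation swaps the eligibility conditions for the paired patterns.

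The one place where you diverge from the paper, and where your proposal contains a concrete error, is the handling of $\rP_{11}/\rP_{12}$. The paper observes (its fact~(3)) that $\cP_9^{\mathrm m}(\sigma)=\cP_{11}^{\mathrm m}(\sigma)$ and $\cP_{10}^{\mathrm m}(\sigma)=\cP_{12}^{\mathrm m}(\sigma)$ for \emph{every} $\sigma$: although the first entry $x$ may sit at a different position for $\rP_9$ versus $\rP_{11}$, the set of eligible middle entries $y$ is literally the same. This collapses the $\rP_{11}/\rP_{12}$ swap onto the $\rP_9/\rP_{10}$ swap, and no bookkeeping on the position $a$ is ever needed.

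Your alternative --- tracking $a$ directly and asserting that the index sets $\{i<b':\sigma_i<\sigma_n\}$ and $\{i<b':\hat\sigma_i<\hat y\}$ coincide --- is false. In the paper's own Example~\ref{p9p10exa}, take the $\rP_{11}$-eligible $y=\sigma_6=5$, so $\hat y=\hat\sigma_6=12$; then the first set is $\{3,5\}$ while the second is $\{3,4,5\}$. Worse, even the maxima (the positions $a$) need not agree: for $y=\sigma_{11}=6$ the $\rP_{11}$-occurrence $1\,6\,7$ in $\sigma$ has $x=1$ at position $8$, whereas the corresponding $\rP_{12}$-occurrence $9\,11\,10$ in $\hat\sigma$ has $x=9$ at position $10$. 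The fix is not to track $a$ at all: once you write $\rP_{11}$- and $\rP_{12}$-eligibility of $y$ purely in terms of the forbidden value interval (no $\sigma_i$ with $i<b'$ and value in $(y,\sigma_n)$, respectively $(\sigma_n,y)$), you recover exactly the $\rP_9$- and $\rP_{10}$-conditions --- which is the paper's observation~(3) in disguise --- and your third step then goes through cleanly.
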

\begin{proof}
If $\sigma_n=1$, then $\hat{\sigma}=\sigma$ and $\sigma$ avoids all four patterns $\rP_9$--$\rP_{12}$, so \eqref{id:quadruple} holds trivially. In what follows we assume that $\sigma_n>1$ and $I(\sigma)=[a,b]$. We introduce the following four subsets of $[n]$, each of which consists of all elligible $y$'s that play the role of the middle entry in any occurrence of the corresponding pattern. For $i=9,10,11,12$, let
$$\cP_i^{\mathrm{m}}(\sigma):=\{\sigma_j: \text{$x\sigma_j\sigma_n$ forms a $\rP_i$-pattern in $\sigma$ for some $x$}\}.$$

We observe the following facts regarding these four subsets.
\begin{enumerate}
	\item $\cP_i^{\mathrm{m}}(\sigma)\subseteq I(\sigma)$.
	\item For each $y\in \cP_i^{\mathrm{m}}(\sigma)$, there exists a unique $x$ such that $xy\sigma_n$ forms $\rP_i$-pattern in $\sigma$.
	\item $\cP_9^{\mathrm{m}}(\sigma)=\cP_{11}^{\mathrm{m}}(\sigma)$ and $\cP_{10}^{\mathrm{m}}(\sigma)=\cP_{12}^{\mathrm{m}}(\sigma)$.
	\item Suppose $y=\sigma_j$, then $y\in\cP_9^{\mathrm{m}}(\sigma)$ if and only if 1) $a\le y<\sigma_n$, and 2) for all $k<j$ such that $\sigma_k<\sigma_n$, we must have $\sigma_k<y$.
	\item Suppose $y=\sigma_j$, then $y\in\cP_{10}^{\mathrm{m}}(\sigma)$ if and only if 1) $b\ge y>\sigma_n$, and 2) for all $k<j$ such that $\sigma_k>\sigma_n$, we must have $\sigma_k>y$.
\end{enumerate}

Most of the claims above are straighforward to verify by examining the shadings in the four patterns $\rP_9$--$\rP_{12}$. We only give more details on item (2). If $y=\sigma_j$ belongs to $\cP_{9}^{\mathrm{m}}(\sigma)$ or $\cP_{10}^{\mathrm{m}}(\sigma)$, then $x$ is given by $\max\{\sigma_k:\text{$k<j$ and $\sigma_k<\sigma_n$}\}$; if $y=\sigma_j$ belongs to $\cP_{11}^{\mathrm{m}}(\sigma)$ or $\cP_{12}^{\mathrm{m}}(\sigma)$, then $x=\sigma_k$ where $k$ is given by $\max\{\ell:\text{$\ell<j$ and $\sigma_{\ell}<\sigma_n$}\}$.

With these observations in mind and recall that $\Psi$ is an involution so $\sigma=\Psi(\Psi(\sigma))=\Psi(\hat{\sigma})$, we see that in order to establish \eqref{id:quadruple}, it suffices now to show that
\begin{align*}
|\cP_9^{\mathrm{m}}(\sigma)|=|\cP_{10}^{\mathrm{m}}(\hat{\sigma})|.
\end{align*}

Indeed, the two sets $\cP_9^{\mathrm{m}}(\sigma)$ and $\cP_{10}^{\mathrm{m}}(\hat{\sigma})$ not only share the same cardinality, they are in one-to-one correspondence in the following sence:
$$\sigma_j\in \cP_{9}^{\mathrm{m}}(\sigma)\text{ if and only if } \hat{\sigma}_j\in \cP_{10}^{\mathrm{m}}(\hat{\sigma}).$$
Thanks to the alternative characterizations given in claims (4) and (5) above, and the fact that $\Psi$ restricted to the active zone $I(\sigma)$ is simply the complementation map (hence all inequalities between entries inside the active zone are reversed going from $\sigma$ to $\hat{\sigma}$), the above statement is easily verified.
\end{proof}

\begin{remark}
In view of both observations (2) and (3) in the proof of Theorem~\ref{thm:quadruple}, we note in passing the following relations that hold for any permutation $\sigma\in\fS_n$:
$$\rP_9(\sigma)=|\cP_9^{\mathrm{m}}(\sigma)|=|\cP_{11}^{\mathrm{m}}(\sigma)|=\rP_{11}(\sigma),\text{ and } \rP_{10}(\sigma)=|\cP_{10}^{\mathrm{m}}(\sigma)|=|\cP_{12}^{\mathrm{m}}(\sigma)|=\rP_{12}(\sigma).$$
Consequently, all four mesh patterns $\rP_9$--$\rP_{12}$ are equidistributed over $\fS_n$ for all $n\ge 1$.
\end{remark}

\begin{example}\label{p9p10exa}
We consider the following permutation $\sigma\in\fS_{15}$ and its image $\hat{\sigma}$ under $\Psi$.
	\begin{align*}
		\sigma &=(13) ~(15)~ 4 ~ ({\bf11})~ 2~ {\bf5}~ ({\bf10}) ~ 1 ~ (14) ~ {\bf8} ~ {\bf6} ~ ({\bf12}) ~ 3 ~ {\bf9} ~ {\bf7},\\
		\hat{\sigma} &= (13)~(15)~4 ~{\bf6} ~2 ~({\bf12})~ {\bf7} ~ 1 ~ (14) ~ {\bf9} ~ ({\bf11}) ~ {\bf5} ~ 3 ~ {\bf8} ~ ({\bf10}).
	\end{align*}
Note that $I(\sigma)=I(\hat{\sigma})=[5,12]$, and the entries that fall in the active zone have been boldfaced. One could verify the following sets of occurrences of the four patterns $\rP_9$--$\rP_{12}$ in $\sigma$ and $\hat{\sigma}$, thus confirming the identity \eqref{id:quadruple}.
\begin{align*}
&\text{In $\sigma$:}~ \rP_{9}=\{457,~567\},\quad \rP_{10}=\{4(11)7,~5(10)7,~587\},\\ 
&\phantom{\text{in $\sigma$:}} ~\rP_{11}=\{257,~167\},\quad \rP_{12}=\{4(11)7,~5(10)7,~187\}.\\
&\text{In $\hat{\sigma}$:}~ \rP_{10}=\{6(12)(10),~9(11)(10)\},\quad \rP_{9}=\{46(10),~67(10),~79(10)\},\\
&\phantom{\text{in $\sigma$:}} ~\rP_{12}=\{2(12)(10),~9(11)(10)\},\quad \rP_{11}=\{46(10),~27(10),~19(10)\}.
\end{align*}
\end{example}

\subsection{Two proofs of Conjecture~\ref{conj-7}}
It remains to prove the last conjecture of Lv and Zhang, which concerns the following two mesh patterns:
$$\rP_{13} =\pattern{scale=1}{3}{1/1,2/2,3/3}{1/0,1/1,2/0,2/1,2/2,3/0,3/1,3/2}\ , \quad \rP_{14}=\pattern{scale=1}{3}{1/1,2/3,3/2}{1/0,1/1,2/0,2/1,2/2,3/0,3/1,3/2}\ .$$
We first give the following characterizations of the entries in any occurrence of these two patterns, whose proof should be clear from observing the shadings in the two patterns.
\begin{proposition}\label{p13p14prop}
Suppose $x$, $y$, and $z$ occur (in that order) in a given permutation $\sigma$.
	\begin{enumerate}
		\item $(x,y,z)$ forms a $\rP_{13}$-pattern in $\sigma$, if and only if $x,y,z$ are three consecutive right-to-left minima of $\sigma$.
		\item $(x,y,z)$ forms a $\rP_{14}$-pattern in $\sigma$, if and only if $x,z$ are two consecutive right-to-left minima of $\sigma$, and to the right of $y$, there is only one element, namely $z$, that is less than $y$.
		\item For each triple $(x,y,z)$ that forms either a $\rP_{13}$- or a $\rP_{14}$-pattern of $\sigma$, once the middle entry $y$ is given, $x$ and $z$ are uniquely determined.
	\end{enumerate}
\end{proposition}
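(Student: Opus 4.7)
The plan is to decode the shadings of the two patterns and match the resulting forbidden regions against the stated characterizations. The common structural feature of $\rP_{13}$ and $\rP_{14}$ is that every cell in columns~$1$, $2$, and $3$ below the top row is shaded, while column~$0$ and the top row are entirely unshaded. This uniform ``no small entries to the right'' shading is precisely what pins the characterizations to the right-to-left minima structure rather than something more intricate.

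For claim~(1), let $(x,y,z)=(\sigma_a,\sigma_b,\sigma_c)$ be an occurrence of $\rP_{13}$, so $x<y<z$. The shading in column~$3$ forbids any value less than $\sigma_c$ from appearing after position $c$, so $\sigma_c$ is a right-to-left minimum. The shading in column~$2$ forbids any value less than $\sigma_c$ strictly between positions $b$ and $c$, which combined with the above makes $\sigma_b$ a right-to-left minimum whose immediate right-to-left minimum successor is $\sigma_c$. The column-$1$ shading handles the pair $(\sigma_a,\sigma_b)$ identically. Conversely, if $x,y,z$ are three consecutive right-to-left minima, then all the required emptiness conditions hold: in the strip strictly between positions $b$ and $c$, the \emph{minimum} of the entries in that strip would itself be a right-to-left minimum if it were smaller than $\sigma_c$, contradicting consecutiveness; the strip between $a$ and $b$ is handled analogously, and the region after $c$ is immediate from the definition of right-to-left minimum.

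For claim~(2), let $(x,y,z)=(\sigma_a,\sigma_b,\sigma_c)$ be an occurrence of $\rP_{14}$; here $y=\sigma_b$ plays the role of the peak of the pattern $132$ and $x<z<y$. The shadings in columns~$2$ and~$3$ at rows $0$, $1$, $2$ forbid every value smaller than $\sigma_b$ from appearing to the right of position $b$, with the single exception of $\sigma_c$ itself at position $c$; this is exactly the condition that $z$ is the only element to the right of $y$ that is smaller than $y$. In particular $\sigma_c$ is a right-to-left minimum, and the column-$1$ shading (no value below $\sigma_c$ between positions $a$ and $b$), together with the fact that every entry to the right of position $b$ other than $\sigma_c$ exceeds $\sigma_b>\sigma_c$, forces $\sigma_a$ and $\sigma_c$ to be consecutive right-to-left minima. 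The converses of both (1) and (2) are direct verifications of the shading conditions from the characterizations.

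Claim~(3) then follows at once: in $\rP_{13}$ the middle entry $y$ is a right-to-left minimum itself and $x,z$ are forced to be its neighbors in the list of right-to-left minima; in $\rP_{14}$ the entries $x,z$ must be the right-to-left minima immediately flanking the position of $y$. The only mildly delicate step is the strip-minimum argument in claim~(1); every other implication is direct case analysis on the shaded cells.
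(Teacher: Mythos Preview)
Your proof is correct and follows exactly the approach the paper intends: the paper itself does not give a proof of this proposition, stating only that it ``should be clear from observing the shadings in the two patterns,'' and you have carried out precisely that observation in detail. One very minor point of wording: in part~(1) you say the column-$1$ shading handles $(\sigma_a,\sigma_b)$ \emph{identically} to column~$2$, but the shadings differ (rows $0,1$ versus rows $0,1,2$); the argument still goes through because forbidding values below $\sigma_b$ between positions $a$ and $b$ is exactly what is needed, but ``analogously'' would be more accurate than ``identically.''
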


Now that we are aware of the importance of the right-to-left minima and ``almost'' right-to-left minima (as those middle entry $y$'s in all occurrences of $\rP_{14}$-patterns), it is beneficial to have an alternative way of expressing a permutation so that these minima could be easily identified. This prompts us to consider the {\it Lehmer code}~\cite{Lem60}, which is a well-known way of encoding permutations; see for instance \cite{Vaj13} for the usefulness of Lehmer code in the study of permutation statistics. Let $\cS_n$ denote the set of {\it subexceedant functions} of length $n$, i.e., sequences $(s_1,\ldots,s_n)$ of $n$ nonnegative integers such that $0\le s_i\le n-i$ for $1\le i\le n$. The version of the Lehmer code required here is the following bijection $L$.
\begin{align*}
L: \fS_n &\to \cS_n \\
\sigma &\mapsto L(\sigma)=(s_1,\ldots,s_n),
\end{align*}
where $s_i=|\{j: j>i, \sigma_j < \sigma_i\}|$.

From the definition of $L$, it is evident that an entry $\sigma_i$ in a permutation $\sigma$ is a right-to-left minimum, if and only if the corresponding entry in the image $L(\sigma)$ is $s_i=0$. With this in mind we introduce the following two sets of indices that characterize the middle entries in any occurrence of the two patterns $\rP_{13}$ and $\rP_{14}$ (see Lemma~\ref{lem:key for conj-7} (1)). Given a subexceedant function $\bs=(s_1,\ldots,s_n)\in\cS_n$, we define two sets associated with it:
\begin{align*}
\cN_0(\bs) &:=\{j: s_j=0, \text{ and there exists $i<j<k$ such that $s_i=s_k=0$}\},\\
\cN_1(\bs) &:=\{j: s_j=1, \text{ and there exists $i<j<k$ such that $s_i=s_k=0$}\},
\end{align*}
and a map from $\cS_n$ to itself:
\begin{align*}
\theta: \cS_n &\to \cS_n \\
\bs=(s_1,\ldots,s_n) &\mapsto \bt=(t_1,\ldots,t_n),
\end{align*}
where for $1\le i\le n$,
\begin{align*}
t_i &:=\begin{cases}
		s_i & ~ \text{if}~ i \notin \cN_0(\bs)\cup \cN_1(\bs),\\ 
		1-s_i & ~ \text{if}~ i \in \cN_0(\bs)\cup \cN_1(\bs).
		\end{cases}
\end{align*}

The following lemma is the key ingredient in our proof of Conjecture~\ref{conj-7}, whose first part is essentially Proposition~\ref{p13p14prop} (3) recast in terms of Lehmer codes.
\begin{lemma}\label{lem:key for conj-7}
\begin{enumerate}
	\item For any permutation $\sigma$ and $\bs=L(\sigma)$, we have 
	\begin{align}\label{id:P and M}
	\rP_{13}(\sigma)=|\cN_0(\bs)|, \text{ and } \rP_{14}(\sigma)=|\cN_1(\bs)|.
	\end{align}
	\item The map $\theta$ as defined above is an involution over $\cS_n$.
\end{enumerate}
\end{lemma}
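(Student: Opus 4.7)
The two parts of the lemma are of rather different natures. Part~(1) is essentially a translation exercise: using Proposition~\ref{p13p14prop}(3), counting the $\rP_{13}$- (resp.\ $\rP_{14}$-)patterns of $\sigma$ reduces to counting the admissible middle entries $y=\sigma_j$, and the Lehmer code is tailor-made for this, since $\sigma_j$ is a right-to-left minimum if and only if $s_j=0$, while $\sigma_j$ admits exactly one smaller element to its right if and only if $s_j=1$. Part~(2) is then a structural verification confirming that the $0/1$-swap encoded by $\theta$ only touches a ``stable interior'' of positions, so that applying it twice recovers $\bs$.

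My plan for part~(1) is to verify, case by case, that the middle-entry characterizations in Proposition~\ref{p13p14prop}(1),(2) translate faithfully to membership in $\cN_0$ and $\cN_1$. For $\rP_{13}$, the three-consecutive-right-to-left-minima condition becomes $s_j=0$ together with the existence of $i<j<k$ with $s_i=s_k=0$, which is precisely $\cN_0(\bs)$. For $\rP_{14}$, the condition that $\sigma_j$ has a unique smaller element to its right forces $s_j=1$, and that element $z$ is automatically a right-to-left minimum (so $s_k=0$ at its position); one then needs a right-to-left minimum strictly to the left of $j$, i.e.\ some $i<j$ with $s_i=0$. The nontrivial direction is showing that, given $j\in\cN_1(\bs)$, choosing $i$ to be the largest index less than $j$ with $s_i=0$ produces $x=\sigma_i$ and $z=\sigma_k$ that are \emph{consecutive} right-to-left minima; this will be checked by a short argument ruling out any additional right-to-left minimum strictly between positions $i$ and $k$.

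For part~(2), I would set $Z(\bs):=\{i:s_i=0\}$, noting that $n\in Z(\bs)$ always because $s_n=0$ by the subexceedant bound. The starting observation is the clean description
\[
\cN_0(\bs)\cup\cN_1(\bs)=\bigl\{j:\ s_j\in\{0,1\}\text{ and }\min Z(\bs)<j<n\bigr\},
\]
so $\theta$ flips only positions strictly between the first and last zero, and only at entries already lying in $\{0,1\}$. The subexceedant condition is automatically preserved, since flipped entries remain in $\{0,1\}$ while their positions satisfy $j<n$, yielding $t_j\le 1\le n-j$. The decisive step is then to show that $\theta$ preserves the support, i.e.\ $\min Z(\theta(\bs))=\min Z(\bs)$ and $\max Z(\theta(\bs))=n$: positions $i\le\min Z(\bs)$ do not belong to $\cN_0\cup\cN_1$ (they admit no earlier zero) and are hence fixed by $\theta$, and $n$ is fixed for the same reason. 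Once this support invariance is established, the right-hand side of the display above is manifestly $\theta$-invariant, so a second application of $\theta$ flips the same coordinates back and returns $\bs$.

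The main obstacle is precisely this invariance of $\min Z$ under $\theta$, because the entries at or just past the first zero of $\bs$ sit at the boundary of $\cN_0\cup\cN_1$ and must be tracked individually; the rest of part~(2) then follows almost mechanically once the clean description of $\cN_0(\bs)\cup\cN_1(\bs)$ and its $\theta$-invariance are in hand.
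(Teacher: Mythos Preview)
Your proposal is correct and follows essentially the same route as the paper's proof. For part~(1) the paper likewise reduces to the biconditional ``$j\in\cN_0(\bs)$ (resp.\ $\cN_1(\bs)$) iff $\sigma_j$ is the middle entry of some $\rP_{13}$- (resp.\ $\rP_{14}$-)occurrence,'' invoking Proposition~\ref{p13p14prop} and choosing the nearest zeros on either side; for part~(2) the paper's key step is exactly your observation that the leftmost and rightmost zeros of $\bs$ lie outside $\cN_0(\bs)\cup\cN_1(\bs)$ and hence survive in $\bt$, which forces the flip set to be $\theta$-invariant (the paper phrases this as $\cN_0(\bt)=\cN_1(\bs)$ and $\cN_1(\bt)=\cN_0(\bs)$, a marginally stronger statement than your union equality, but the argument is the same).
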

\begin{proof}
The truth of (1) is based on the following two facts.
\begin{enumerate}[(i)]
	\item $j\in\cN_0(\bs)$ if and only if $\sigma_j$ appears as the middle entry in an occurrence of $\rP_{13}$-pattern in $\sigma$. $j\in\cN_1(\bs)$ if and only if $\sigma_j$ appears as the middle entry in an occurrence of $\rP_{14}$-pattern in $\sigma$.
	\item Each occurrence of pattern $\rP_{13}$ or $\rP_{14}$ is uniquely determined by its middle entry.
\end{enumerate}
To see (i), suppose $j\in\cN_0(\bs)$ and find the largest $i$ and smallest $k$ such that $i<j<k$ and $s_i=s_k=0$, then $(\sigma_i,\sigma_j,\sigma_k)$ forms a $\rP_{13}$-pattern in $\sigma$ according to Proposition~\ref{p13p14prop} (1), the ``if'' part should be clear as well. The case of $j\in\cN_1(\bs)$ is analogous and relies on Proposition~\ref{p13p14prop} (2). Fact (ii) is simply a restatement of Proposition~\ref{p13p14prop} (3).

For (2), it suffices to show that if $\theta(\bs)=\bt$, then
\begin{align}\label{id:N0N1 switch}
\cN_0(\bt)=\cN_1(\bs), \text{ and } \cN_1(\bt)=\cN_0(\bs).
\end{align}
Indeed, when $\bs$ has only one $0$ (equivalently, this means that $\sigma=L^{-1}(\bs)$ ends with $1$), both $\cN_0(\bs)$ and $\cN_1(\bs)$ are empty sets and thus $\bt=\bs$, whence \eqref{id:N0N1 switch} holds true. Otherwise $\bs$ has at least two $0$'s, we see that neither the leftmost $0$ nor the rightmost $0$ belongs to $\cN_0(\bs)\cup \cN_1(\bs)$, which implis that they both stay as $0$ in $\bt$, being still the leftmost and the rightmost $0$ of $\bt$. This in turn guarantees that $i\in\cN_0(\bs)$ (resp., $i\in\cN_1(\bs)$) if and only if $i\in\cN_1(\bt)$ (resp., $i\in\cN_0(\bt)$), which results in \eqref{id:N0N1 switch} as well.
\end{proof}

Now we have all the components needed in our proof of the last conjecture of Lv and Zhang.

\begin{proof}[Proof of Conjecture~\ref{conj-7}]
Thanks to \eqref{id:P and M} and \eqref{id:N0N1 switch}, all it takes now to prove Conjecture~\ref{conj-7}, is to consider the functional composition
$$\Theta:=L^{-1}\circo\theta\circo L,$$
which is an involution over $\fS_n$, such that for any permutation $\sigma\in\fS_n$ and its image $\hat{\sigma}=\Theta(\sigma)$, we have the following two strings of euqations. 
\begin{align*}
\rP_{13}(\sigma)=|\cN_0(L(\sigma))|=|\cN_1(\theta\circo L(\sigma))|=\rP_{14}(L^{-1}\circo\theta\circo L(\sigma))=\rP_{14}(\hat{\sigma}),\\
\rP_{14}(\sigma)=|\cN_1(L(\sigma))|=|\cN_0(\theta\circo L(\sigma))|=\rP_{13}(L^{-1}\circo\theta\circo L(\sigma))=\rP_{13}(\hat{\sigma}).
\end{align*}
Viewing the two ends of the above equations, we confirm Conjecture~\ref{conj-7}.
\end{proof}

\begin{example}\label{p13p14exa}
In this example, given a permutation $\sigma\in\fS_{14}$, we break down the steps to find its image $\hat{\sigma}=\Theta(\sigma)$, and verify the pattern statistics for both $\sigma$ and $\hat{\sigma}$. The letters whose indices fall in the union $\cN_0(L(\sigma))\cup \cN_1(L(\sigma))$ have been boldfaced.
	\begin{align*}
		\sigma = &~3~5~ 2~ 6 ~1~ 4 ~12~ 8~ 9 ~7~ 11~ 14~ 13 ~10\\
		\stackrel{L}{\mapsto} &~(2,~3,~1,~2,~0,~{\bf0},~5,~{\bf1},~{\bf1},~{\bf0},~{\bf1},~2,~{\bf1},~0)\\
		\stackrel{\theta}{\mapsto} &~(2,~3,~1,~2,~0,~{\bf1},~5,~{\bf0},~{\bf0},~{\bf1},~{\bf0},~2,~{\bf0},~0)\\
		\stackrel{L^{-1}}{\mapsto} &~3 ~5 ~2~ 6 ~1~ 7~ 12~ 4~ 8~ 10~ 9~ 14~ 11~ 13=\hat{\sigma}.
	\end{align*}
\begin{align*}
\text{In $\sigma$:}&~\rP_{13}=\{147,~47(10)\},	\quad \rP_{14}=\{487,~497,~7(11)(10),~7(13)(10)\},\\
\text{In $\hat{\sigma}$:}&~\rP_{14}=\{174,~8(10)9\},	\quad \rP_{13}=\{148,~489,~89(11),~9(11)(13)\}.
\end{align*}
\end{example}

If we consider the bivariate enumerator $F_n(s,t):= \sum_{\sigma \in \fS_{n}} s^{\rP_{13}(\sigma)}t^{\rP_{14}(\sigma)}$, then Conjecture~\ref{conj-7} is equivalent to saying that $F_n(s,t)=F_n(t,s)$ for all $n\ge 1$. We have the following recurrence relation for $F_n(s,t)$ which manifestly implies this symmetry. This is an algebraic approach, abeit the proof we present here is still via combinatorial analysis, aligned with the theme of this entire section.

\begin{theorem}\label{recforp13p14}
We have $F_1(s,t)=1$, $F_2(s,t)=2$, and for $n \ge 3$, the following recurrence relation holds.
\begin{align}\label{id:recforp13p14}
	F_n(s,t)=n(n-2)!+(n-2+s+t)(F_{n-1}(s,t)-(n-2)!).
\end{align}
Consequently, $F(s,t)$ is a polynomial in $(s+t)$ and Conjecture~\ref{conj-7} is true.
\end{theorem}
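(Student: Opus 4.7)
My plan is to establish the recurrence \eqref{id:recforp13p14} combinatorially by analyzing how insertion of the largest letter $n$ into a permutation $\tau \in \fS_{n-1}$ affects the statistics $\rP_{13}$ and $\rP_{14}$. The concluding assertions (polynomiality in $s+t$, and hence $F_n(s,t) = F_n(t,s)$) will then drop out of the recurrence by a one-line induction, since $(n-2+s+t)$ is manifestly symmetric in $s$ and $t$ and the base cases $F_1 = 1$, $F_2 = 2$ are constants.

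Write $r(\tau)$ for the number of right-to-left minima of $\tau$. The first step is the observation that the set of letters that are right-to-left minima is preserved when $n$ is inserted at any position $p \le n-1$, whereas appending $n$ (position $p = n$) simply adjoins $n$ as a new right-to-left minimum. Since $\rP_{13}(\sigma) = \max(r(\sigma) - 2, 0)$ by Proposition~\ref{p13p14prop}(1), $\rP_{13}$ is unchanged for $p \le n-1$ and increases by $[r(\tau) \ge 2]$ when $n$ is appended. For $\rP_{14}$, the crucial point is that for every preexisting letter $y = \tau_i$, the count of smaller letters to the right of $y$ is unaffected by inserting $n$ (as $n$ is the maximum), so by Proposition~\ref{p13p14prop}(2) the contribution of $y$ to $\rP_{14}$ is preserved. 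The only possible new contribution comes from $y = n$, and it arises precisely when $n$ has exactly one smaller letter to its right \emph{and} a right-to-left minimum precedes it, which forces $p = n-1$ together with $r(\tau) \ge 2$.

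Collecting these effects: among the $n$ positions at which $n$ can be inserted, exactly $n-2$ of them (namely $p = 1, \ldots, n-2$) leave both statistics unchanged, one ($p = n-1$) increases $\rP_{14}$ by $[r(\tau) \ge 2]$, and one ($p = n$) increases $\rP_{13}$ by $[r(\tau) \ge 2]$. The exceptional class $r(\tau) = 1$ is exactly $\{\tau \in \fS_{n-1} : \tau_{n-1} = 1\}$, of cardinality $(n-2)!$, and each such $\tau$ satisfies $\rP_{13}(\tau) = \rP_{14}(\tau) = 0$, contributing $n\cdot 1\cdot 1$ rather than $(n-2+s+t)$ when summed over its $n$ extensions. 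Splitting $\fS_{n-1}$ accordingly gives
\[
F_n(s,t) = n(n-2)! + (n-2+s+t)\bigl(F_{n-1}(s,t) - (n-2)!\bigr),
\]
which is precisely \eqref{id:recforp13p14}.

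I expect the main obstacle to lie in the careful bookkeeping of the $\rP_{14}$ analysis: one must verify that the insertion neither creates a spurious right-to-left minimum that breaks an existing ``consecutive right-to-left minima'' pair $(x,z)$ sandwiching an old $y$, nor alters the uniqueness of the matching $x$ for some $y$. For $p = n-1$ one should check that $y = n$ genuinely forms a $\rP_{14}$-occurrence with the last two right-to-left minima of $\tau$ when $r(\tau) \ge 2$, and that the shift of $\tau_{n-1}$ from position $n-1$ to position $n$ disturbs no existing occurrence; for $p = n$ one should confirm that the newly created consecutive pair $(\tau_{n-1}, n)$ has no intermediate position and therefore contributes nothing new to $\rP_{14}$.
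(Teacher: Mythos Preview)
Your proposal is correct and follows essentially the same strategy as the paper: insert the letter $n$ into $\tau\in\fS_{n-1}$ and track the effect on $\rP_{13}$ and $\rP_{14}$ according to the insertion position. The only cosmetic difference is that the paper first extracts the constant term $F_n(0,0)=2(n-1)!$, rewrites \eqref{id:recforp13p14} as a recurrence for $F_n^*:=F_n-F_n(0,0)$, and carries out the case analysis via Lehmer codes, whereas you prove \eqref{id:recforp13p14} directly by splitting $\fS_{n-1}$ according to whether $r(\tau)=1$ (equivalently $\tau_{n-1}=1$) and appealing to Proposition~\ref{p13p14prop}; your packaging is arguably the more streamlined of the two.
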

\begin{proof}
The cases $n=1,2$ are clear, so we assume $n\ge 3$ in what follows. We analyze the constant term $F_n(0,0)$ first. Suppose $\sigma\in\fS_n$. We claim the following characterization.
\begin{align}\label{P13P14zero}
\rP_{13}(\sigma)=\rP_{14}(\sigma)=0 \text{ if and only if $\sigma_{n-1}=1$ or $\sigma_n=1$.}
\end{align}
The ``if'' part is clear. For the other direction, we assume that $\sigma_i=1$ for some $1\le i\le n-2$. Now let $\sigma_j:=\min\{\sigma_k: i<k<n\}$ and consider the following two cases.
\begin{enumerate}
	\item $\sigma_j<\sigma_n$. Then $\sigma_j$ is itself a right-to-left minimum, a third one apart from $\sigma_n$ and $\sigma_i=1$, thus we see that $\rP_{13}(\sigma)>0$.
	\item $\sigma_j>\sigma_n$. Then $j\in\cN_1(L(\sigma))$ and from \eqref{id:P and M} we see that $\rP_{14}(\sigma)=|\cN_1(L(\sigma))|>0$.
\end{enumerate}

In $\fS_n$, there are $(n-1)!$ permutations ends with letter $1$, and $(n-1)!$ permutations with letter $1$ in the penultimate position. So \eqref{P13P14zero} leads to the enumeration $F_n(0,0)=2(n-1)!$ for $n\ge 2$. Setting $F_n^*(s,t):=F_n(s,t)-F_n(0,0)$, we could rewrite \eqref{id:recforp13p14} as follows.
\begin{align}\label{id:recF*}
F_n^*(s,t) &= (n-2+s+t)F_{n-1}^*(s,t)+(n-2)!(s+t).
\end{align}

For $n\ge 3$, we denote $\fS_n^*:=\{\sigma\in\fS_n: \sigma^{-1}(1)\le n-2\}$. Then we derive from \eqref{P13P14zero} that $F_n^*(s,t)=\sum_{\sigma\in\fS_n^*}s^{\rP_{13}(\sigma)}t^{\rP_{14}(\sigma)}$. To explain \eqref{id:recF*} combinatorially, we insert letter $n$ into a permutation $\bar{\sigma}\in\fS_{n-1}$ to get a permutation $\sigma\in\fS_n^*$, keeping track of the changes on the numbers of $\rP_{13}$- and $\rP_{14}$-patterns caused by this insertion. It seems that using Lehmer code could again make the explanation clearer. Suppose $L(\bar{\sigma})=\bar{\bs}$, $L(\sigma)=\bs$, and that $n$ is inserted before $\bar{\sigma}_j$, $1\le j\le n$\footnote{Here ``inserted before $\bar{\sigma}_n$'' refers to the case that $n$ is appended to the right end of $\bar{\sigma}$, i.e., after $\bar{\sigma}_{n-1}$.}. If $\bar{\sigma}\in\fS_{n-1}^*$, there are three cases to consider. 
\begin{itemize}
	\item $1\le j\le n-2$, then we see that $s_l=\bar{s}_l$ for $1\le \ell\le j-1$, $s_l=\bar{s}_{l-1}$ for $j+1\le \ell\le n$, and $s_j=n-j\ge 2$. Therefore $\cN_0(\bs)=\cN_0(\bar{\bs})$ and $\cN_1(\bs)=\cN_1(\bar{\bs})$, or equivalently, $\rP_{13}(\sigma)=\rP_{13}(\bar{\sigma})$ and $\rP_{14}(\sigma)=\rP_{14}(\bar{\sigma})$. This explains the term $(n-2)F_{n-1}^*(s,t)$.
	\item $j=n-1$, then $s_{n-1}=1$, whence $\cN_1(\bs)=\cN_1(\bar{\bs})\cup\{n-1\}$ and $|\cN_0(\bs)|=|\cN_0(\bar{\bs})|$, or equivalently, $\rP_{14}(\sigma)=\rP_{14}(\bar{\sigma})+1$ and $\rP_{13}(\sigma)=\rP_{13}(\bar{\sigma})$, which gives rise to the term $tF_{n-1}^*(s,t)$.
	\item $j=n$, then $\sigma_n=n$ is a new right-to-left minimum. More precisely we have $\cN_0(\bs)=\cN_0(\bar{\bs})\cup\{n-1\}$ and $\cN_1(\bs)=\cN_1(\bar{\bs})$, which explains the term $sF_{n-1}^*(s,t)$.
\end{itemize}
Otherwise, we must have $\bar{\sigma}\in\fS_{n-1}\setminus\fS_{n-1}^*$, i.e., $\bar{\sigma}_{n-2}=1$ or $\bar{\sigma}_{n-1}=1$. But the latter case is impossible since there is no way to insert $n$ into such $\bar{\sigma}$ and arrive at $\sigma\in\fS_n^*$. So we have $\bar{\sigma}_{n-2}=1$ (there are $(n-2)!$ such $\bar{\sigma}$'s) and $n$ can only be inserted before $\bar{\sigma}_{n-1}$ or after $\bar{\sigma}_{n-1}$ to guarantee that $\sigma\in\fS_n^*$. The first case gives us $(n-2)!t$ while the latter case accounts for $(n-2)!s$.

We have established \eqref{id:recF*} and the proof is now complete.
\end{proof}

%%%%%%%%%%%%%%%%%%%%%%%%%%%%%%%%%%%%%%%%%%%%%
\section{Two monotone mesh patterns with corner shadings}\label{sec:monotone}
%%%%%%%%%%%%%%%%%%%%%%%%%%%%%%%%%%%%%%%%%%%%%
We begin this section with a proof of Proposition~\ref{prop:2 corners suffice}, then we present three enumerative results for permutations that avoid a pair of monotone (mesh) patterns, one from the ascending family $\rA_k$, one from the descending family $\rD_k$.

\begin{proof}[Proof of Proposition~\ref{prop:2 corners suffice}]
The proofs of \eqref{eq:A=tildeA} and \eqref{eq:D=tildeD} are almost identical, or alternatively, one gets one identity from the other by taking complement. Therefore we show only \eqref{eq:A=tildeA}. The inclusion $\fS_n(\rA_k)\subseteq\fS_n(\widetilde{\rA}_k)$ is clear from the definitions of the avoidance of patterns $\rA_k$ and $\widetilde{\rA}_k$. We aim to show the reverse inclusion. Suppose there exists a permutation $\sigma\in\fS_n(\widetilde{\rA}_k)$ but $\sigma\not\in\fS_n(\rA_k)$. Thus we can find a $k$-tuple of indices $1\le a_1<a_2<\cdots<a_k \le n$ such that
\begin{enumerate}
	\item $\sigma_{a_1}\sigma_{a_2}\cdots\sigma_{a_k}$ forms an occurrence of pattern $\rA_k$ in $\sigma$;
	\item $|a_1-a_k|=\max\{|b_1-b_k|: \sigma_{b_1}\cdots\sigma_{b_k} \text{ forms an occurrence of $\rA_k$ in $\sigma$}\}$.
\end{enumerate}
We claim that $\sigma_{a_1}\sigma_{a_2}\cdots\sigma_{a_k}$ also constitutes an occurrence of pattern $\widetilde{\rA}_k$ in $\sigma$, which leads to a contradiction. Assume not, then either there exists a certain $x\in [a_1-1]$ such that $\sigma_x<\sigma_{a_1}$; or we can find a certain $y\in[a_k+1,n]$ such that $\sigma_y>\sigma_{a_k}$. In the first case, we see that the $k$-tuple $(x,a_2,\ldots,a_k)$ witnesses another occurrence of pattern $\rA_k$ and $|x-a_k|>|a_1-a_k|$, contradicting our choice of $(a_1,\ldots,a_k)$ (condition (2)). For the second case, the $k$-tuple $(a_1,\ldots,a_{k-1},y)$ results in a similar contradiction.

In conclusion, the existence of such $\sigma$ leads to a contradiction so we must have the reverse inclusion $\fS_n(\rA_k)\supseteq\fS_n(\widetilde{\rA}_k)$ and this completes the proof of \eqref{eq:A=tildeA}.
\end{proof}

\begin{proposition}\label{thm:P1P2}
	We have $\fS_1(\rA,\rD)=\fS_1=\{1\}$, and $\fS_n(\rA,\rD)=\varnothing$ for $n\geq 2$.
\end{proposition}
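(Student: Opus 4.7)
The proposal is to exhibit, for every $\sigma\in\fS_n$ with $n\ge 2$, an occurrence of $\rA$ or of $\rD$ using only the extremal letters $\sigma_1$ and $\sigma_n$. The base case $n=1$ is handled immediately: each of $\rA=\rA_2$ and $\rD=\rD_2$ contains two heavy dots, so neither pattern can occur in a permutation of length one, and hence $\fS_1(\rA,\rD)=\fS_1=\{1\}$.

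For $n\ge 2$ I would split on the comparison of $\sigma_1$ with $\sigma_n$, which are necessarily distinct. Suppose first that $\sigma_1<\sigma_n$. Then the two-letter subsequence $(\sigma_1,\sigma_n)$ realises the underlying classical pattern $12$ of $\rA$. The key observation is that the only shaded cells of $\rA$ are its top-left and bottom-right corners; once the first dot is matched with position $1$ of $\sigma$ and the second dot with position $n$, those cells correspond precisely to the regions strictly to the left of position $1$ and strictly to the right of position $n$. Both such regions are empty, so every shading constraint is vacuously satisfied and $(\sigma_1,\sigma_n)$ is a bona fide occurrence of $\rA$. The remaining case $\sigma_1>\sigma_n$ is completely symmetric: the pair $(\sigma_1,\sigma_n)$ realises the classical pattern $21$ underlying $\rD$, and the two corner shadings of $\rD$ again constrain only what lies outside positions $1$ and $n$, hence impose no restriction. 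This yields an occurrence of $\rD$.

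\textbf{Anticipated obstacle.} None of substance. The proof is essentially a one-line observation that a mesh shading confined to the outermost columns and rows of the pattern grid is automatically vacuous for any occurrence that uses the extreme positions $1$ and $n$ of the ambient permutation. Combined with the dichotomy $\sigma_1\ne\sigma_n$, this forces $\sigma$ to contain either $\rA$ or $\rD$, so $\fS_n(\rA,\rD)=\varnothing$ for every $n\ge 2$.
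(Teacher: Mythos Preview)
Your proof is correct and follows essentially the same idea as the paper's: pick a canonical pair of entries for which the corner shadings become vacuous. The only cosmetic difference is that the paper uses the extremal \emph{values} $1$ and $n$ (so the value constraints imposed by the shaded corners are vacuous), whereas you use the extremal \emph{positions} $\sigma_1$ and $\sigma_n$ (so the positional constraints are vacuous); the two arguments are dual under inversion and equally short.
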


\begin{proof}
	Suppose $n\geq 2$ and there exists a permutation $\sigma\in\fS_n(\rA,\rD)$, then considering the positions of letters $1$ and $n$ in $\pi$ reveals that we have either an occurrence of pattern $\rA$ in $\cdots 1\cdots n\cdots$, or an occurrence of pattern $\rD$ in $\cdots n\cdots 1\cdots$, leading to a contradiction in both cases.
\end{proof}

\begin{theorem}\label{thm:P1P4}
	We have $\fS_1(\rA,\rD_3)=\fS_1=\{1\}$, and for $n\geq 2$, $|\fS_n(\rA,\rD_3)|=n-1$. More precisely,
	$$\fS_n(\rA,\rD_3)=\{a(a+1)\cdots (n-1)n12\cdots (a-1): 2\le a\le n\}.$$ 
\end{theorem}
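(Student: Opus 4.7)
The plan is to prove both inclusions. For the easy direction, each candidate $\sigma_a := a(a{+}1)\cdots n\,1\,2\cdots(a{-}1)$ with $2 \leq a \leq n$ decomposes into two increasing runs, so its longest decreasing subsequence has length~$2$; in particular it contains no classical~$321$, and hence no $\rD_3$. Moreover, for every ascending pair $(\sigma_i, \sigma_j)$ in $\sigma_a$, either $\sigma_i$ belongs to the left run (in which case some letter of the right run is smaller than $\sigma_i$ and populates the bottom-right shading) or $\sigma_j$ belongs to the right run (in which case some letter of the left run is larger than $\sigma_j$ and populates the top-left shading), so $\sigma_a$ avoids $\rA$ as well. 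Listing $a = 2, 3, \ldots, n$ yields exactly $n - 1$ such permutations.

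For the converse, fix $\sigma \in \fS_n(\rA, \rD_3)$ with $n \geq 2$, set $m := \sigma^{-1}(1)$, and write $L := \{\sigma_1, \ldots, \sigma_{m-1}\}$ and $R := \{\sigma_{m+1}, \ldots, \sigma_n\}$. I would first collect three rigidity observations, each a direct consequence of the shading characterisations of $\rA$ and $\rD_3$: (i)~$m \geq 2$, else $(1, \sigma_2)$ has both shaded regions empty and is an $\rA$-occurrence; (ii)~$\sigma_1 > \sigma_n$, by the same reasoning applied to $(\sigma_1, \sigma_n)$; (iii)~if $k$ is the position of $n$ then $k = m - 1$, since $k = n$ would yield an $\rA$ via $(\sigma_1, n)$, $k > m$ would yield an $\rA$ via $(1, n)$, and $k < m - 1$ would yield a $\rD_3$ via $(n, \sigma_{k+1}, 1)$ (whose shaded cells are vacuously empty). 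Next I would extract the auxiliary fact that every letter of $L$ strictly greater than $\max(R)$ must lie in an increasing \emph{suffix} of $L$: any 21-inversion within $L$ combines with $\sigma_m = 1$ into a 321 triple whose only escape from being a $\rD_3$-occurrence is that some $R$-letter exceed the larger entry of the inversion.

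The heart of the argument---and the step I expect to be the main technical obstacle---is to show, in the case $m < n$, that $\sigma_n = \max(R)$ and $L = \{\sigma_n + 1, \ldots, n\}$. My plan is to eliminate the alternative $\sigma_n < \max(R)$: if every letter of $L$ already exceeds $\sigma_n$, then $(n, \max(R), \sigma_n)$ is already a $\rD_3$-occurrence; otherwise set $k := \min L < \sigma_n$ and examine the pair $(k, \max(R))$, whose top-left shading is empty by the suffix fact, so it is an $\rA$-occurrence unless some letter $\sigma_\ell \in \{2, \ldots, k-1\}$ lies to the right of $\max(R)$---but any such $\sigma_\ell$ itself produces the $\rD_3$-triple $(n, \max(R), \sigma_\ell)$, since $\sigma_\ell < k = \min L$ means no $L$-letter is small enough to populate its bottom-left shading. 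This squeeze between an $\rA$-pair and a $\rD_3$-triple eliminates the case $\max(R) > \sigma_n$; the same 321 triple $(\sigma_1, x, 1)$ then rules out any $x \in L$ with $x < \sigma_n$ (the escape now requires $\max(R) > \sigma_1$, which fails since $\max(R) = \sigma_n < \sigma_1$). Cardinality forces $L = \{\sigma_n + 1, \ldots, n\}$ and $R = \{2, \ldots, \sigma_n\}$, the suffix fact renders $L$ entirely an increasing chain, and a mirror argument (any 21-inversion in $R$ would combine with $n$ into an unrescuable $\rD_3$) forces $R$ to be increasing. Setting $a := \sigma_1 = \sigma_n + 1$ gives $\sigma = a(a{+}1)\cdots n\,1\,2\cdots(a{-}1)$ as claimed; the fringe case $m = n$ is handled identically by the suffix fact applied to the empty $R$, producing $a = 2$.
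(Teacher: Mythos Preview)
Your proof is correct and follows essentially the same route as the paper's: both arguments first pin down that $n$ and $1$ are adjacent with $n$ to the left (your observation~(iii)), then show that everything left of $1$ exceeds everything right of $1$, and finally that both blocks are increasing. The paper reaches the middle step more directly by immediately comparing $a_1:=\min(\sigma')$ with $a_2:=\max(\sigma'')$ and deriving a contradiction from $a_1<a_2$ in one stroke, whereas you detour through the auxiliary ``suffix fact'' and the two sub-cases on whether $\min L<\sigma_n$; but the underlying contradictions (an $\rA$-pair rescued only by creating a $\rD_3$-triple) are identical.
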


\begin{proof}
	Assume $n\geq 2$ and take any permutation $\sigma\in \fS_n(\rA, \rD_3)$. Note that in order to avoid pattern $\rA$, $n$ must precedes $1$ in $\sigma$, and $n$ and $1$ must occur consecutively to avoid pattern $\rD_3$. So we can assume $\sigma$ has the decomposition $\sigma=\sigma' n1 \sigma''$. It remains to determine $\sigma'$ and $\sigma''$. We claim the following two observations.
	\begin{enumerate}
		\item The letters in $\sigma'$ are larger than those in $\sigma''$.
		\item The letters in both $\sigma'$ and $\sigma''$ are monotonically increasing.
	\end{enumerate}

	Let $a_1$ be the smallest element in $\sigma'$ and $a_2$ be the largest element in $\sigma''$. Now claim (1) is equivalent to saying that $a_1>a_2$ or at least one of $\sigma'$ and $\sigma''$ is empty. Assume on the contrary that we have $a_1<a_2$. In order to avoid $\rA$, we must find either in $\sigma'$ a letter $b_1$ to the left of $a_1$ such that $b_1>a_2$, or in $\sigma''$ a letter $b_2$ to the right of $a_2$ such that $b_2<a_1$. In the first case the triple $b_1a_1 1$ presents an occurrence of $\rD_3$, while in the second case the triple $na_2b_2$ presents an occurrence of $\rD_3$. So our assumption $a_1<a_2$ cannot hold and the claim (1) is valid.

	To show claim (2), simply note that if there is a descent, say $ab$ with $a>b$, in $\sigma'$, then the triple $ab1$ forms pattern $\rD_3$. Analogously, if there is a descent $ab$ in $\sigma''$, then the triple $nab$ forms pattern $\rD_3$.
	
	Now we have verified the two claims, from which we can deduce that all permutations in $\fS_n(\rA,\rD_3)$ are of the form $a(a+1)\cdots (n-1)n12\cdots (a-1)$ for a certain $2\le a\le n$. Conversely, every such permutation avoids the mesh pattern $\rA$ and the classical pattern $321$, and hence the pattern $\rD_3$ as well. So we establish the desired characterization of $\fS_n(\rA,\rD_3)$ for $n\ge 2$.
\end{proof}

According to the celebrated Erd\"os-Szekeres Theorem~\cite{ES35} (see \cite{Jun24} for a modern narrative on the stories behind this so-called ``happy ending problem''), one sees that $\fS_n(123,321)=\varnothing$ for any $n\ge 5$, which reduces the enumeration to boredom. When shadings are taken into account however, we find the following non-trivial enumerative result concerning the mesh patterns $\rA_3$ and $\rD_3$.

\begin{theorem}\label{thm:P3P4}
	For $n\ge 1$, we have $|\fS_n(\rA_3, \rD_3)|=2^{n-1}$.
\end{theorem}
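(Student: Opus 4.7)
The plan is to identify the structure of permutations in $\fS_n(\rA_3, \rD_3)$ and set up a bijection with binary strings of length $n-1$.

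I would first prove an adjacency lemma: for every $\sigma \in \fS_n(\rA_3, \rD_3)$ with $n \geq 2$, the letters $1$ and $n$ occupy adjacent positions in $\sigma$. Let $p$ and $q$ denote their positions. If $p + 1 < q$ (the case $q + 1 < p$ being dual via $\rD_3$), pick any position $r$ with $p < r < q$. Then $\sigma_r \in \{2, \ldots, n-1\}$ lies strictly between $1$ and $n$, so the triple $(1, \sigma_r, n)$ at positions $(p, r, q)$ is a $123$-occurrence. Both $\rA_3$ corner conditions hold vacuously (no entry of $\sigma$ can exceed $n$, and no entry can be below $1$), producing an $\rA_3$-occurrence in $\sigma$ and contradicting the hypothesis.

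Given adjacency, each $\sigma$ splits into Case A, $\sigma = \sigma^{(1)} \cdot 1 \cdot n \cdot \sigma^{(2)}$, or Case B, $\sigma = \sigma^{(1)} \cdot n \cdot 1 \cdot \sigma^{(2)}$; the reversal $\sigma \mapsto \sigma^r$ swaps $\rA_3 \leftrightarrow \rD_3$ and interchanges the two cases, so they are equinumerous. I would associate to $\sigma$ the string $(b_2, \ldots, b_n) \in \{0, 1\}^{n-1}$ with $b_i = 1$ iff the letter $i$ lies to the right of $1$ in $\sigma$, and argue that this map is a bijection from $\fS_n(\rA_3, \rD_3)$ onto $\{0, 1\}^{n-1}$. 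The bit $b_n$ selects the case (Case A precisely when $b_n = 1$), and the remaining bits record which of $\{2, \ldots, n-1\}$ populate $\sigma^{(1)}$ versus $\sigma^{(2)}$. Given this data, the crux is to show that the arrangement within each of $\sigma^{(1)}$ and $\sigma^{(2)}$ is uniquely forced by the avoidance conditions — so injectivity and existence together amount to describing the unique valid arrangement.

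To pin down that arrangement I would trace three families of potentially dangerous triples: (a) an ascending pair $(u, v)$ inside $\sigma^{(1)}$ together with $n$ forms a $123$-pattern that is $\rA_3$ unless some element of $\sigma^{(2)}$ lies below $u$; (b) an ascending pair $(v, w)$ inside $\sigma^{(2)}$ together with $1$ forms a $123$-pattern that is $\rA_3$ unless some element of $\sigma^{(1)}$ exceeds $w$; and (c) descending triples $(a, b, c)$ straddling the $1n$-block (with $a \in \sigma^{(1)}$ and $b, c \in \sigma^{(2)}$, or the mirror case) give $\rD_3$-obstructions whose corner conditions depend on the extreme values within each block and on the first and last positions inside each block. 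Putting these constraints together forces the arrangement of $\sigma^{(2)}$ to place its elements exceeding $\max \sigma^{(1)}$ first in decreasing order, then those straddled by $\sigma^{(1)}$ in increasing order, then those below $\min \sigma^{(1)}$ in increasing order; a dual rule governs $\sigma^{(1)}$. Existence then reduces to verifying that this explicit candidate avoids every triple in families (a), (b), (c). The main obstacle is the mixed-block bookkeeping in (c), which branches into several subcases depending on how the extremes of $\sigma^{(1)}$ and $\sigma^{(2)}$ interleave; once these are handled uniformly, the bijection is established and the count $|\fS_n(\rA_3, \rD_3)| = 2^{n-1}$ follows immediately (with the base case $n = 1$ being trivial).
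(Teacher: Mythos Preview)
Your approach is genuinely different from the paper's: the paper argues by recursion on $n$, tracking the positions of $n-1$ and $2$ relative to the forced block $1n$, and derives the system $f(n)/2 = f(n-1)/2 + g(n)$, $g(n) = g(n-1) + f(n-2)/2$ which collapses to $f(n) = 2f(n-1)$. You instead attempt a direct bijection with $\{0,1\}^{n-1}$ by recording which letters lie to the right of $1$. Your adjacency lemma is correct and matches the paper's Step~1.

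However, there is a genuine gap. Your stated rule for the forced arrangement of $\sigma^{(2)}$ --- ``elements exceeding $\max\sigma^{(1)}$ decreasing, then straddled elements increasing, then elements below $\min\sigma^{(1)}$ increasing'' --- is wrong. Take $n=7$, Case~A, with $\sigma^{(1)}$ consisting of $\{3,5\}$ and $\sigma^{(2)}$ consisting of $\{2,4,6\}$. Your rule gives $\sigma^{(2)}=642$, so $\sigma\in\{3517642,\,5317642\}$. But $3517642$ contains the $\rD_3$-occurrence $(5,4,2)$ at positions $(2,6,7)$ (both corners are vacuously empty), and $5317642$ contains the $\rD_3$-occurrence $(5,3,2)$ at positions $(1,2,7)$. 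Exhausting all twelve arrangements shows the unique valid permutation is $3517624$, i.e.\ $\sigma^{(2)}=624$, with the ``below'' and ``straddled'' blocks in the opposite order from what you claim.

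More fundamentally, you never actually prove that a unique valid arrangement exists for every partition $(L,R)$; you only sketch the families of dangerous triples and assert that ``putting these constraints together forces the arrangement'', conceding that the mixed-block case analysis is ``the main obstacle''. Since the only explicit arrangement you propose is incorrect, and the existence/uniqueness claim --- which is the entire content of the theorem once the adjacency lemma is in hand --- is left unproved, the argument as written does not establish the result. The bijective route may well be salvageable (in the examples above a unique arrangement does exist), but the correct description of that arrangement is more delicate than your three-block rule, and proving it works in general requires substantially more than what you have outlined.
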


\begin{proof}
	Set $f(n):=|\fS_n(\rA_3, \rD_3)|$. It is easy to check the statement for $n=1, 2, 3$. Hence we assume $n\geq 4$ and let $\sigma\in \fS_n(\rA_3, \rD_3)$. The proof breaks down to three steps.

	\noindent {\bf Step 1.} Note that $\fS_n(\rA_3, \rD_3)$ is closed under the reversal 
		$$\sigma\to\sigma^r:=\sigma_n\sigma_{n-1}\cdots\sigma_1,$$ 
		so we can assume without loss of generality that $1$ precedes $n$ in $\sigma$. In view of the avoidance of pattern $\rA_3$, $1$ and $n$ must be consecutive in $\sigma$. From now on we assume $1n$ is a factor in $\sigma$ and the total number of such permutations is $f(n)/2$. 
	\smallskip

	\noindent {\bf Step 2.} Assume $n-1$ is to the right of $n$ in $\sigma$, then $n-1$ must be adjacent to $n$. Indeed, suppose there exists a letter $x$ such that $1nx(n-1)$ is a subsequence in $\sigma$. Note that $1x(n-1)$ is an occurrence of $\rA_3$, which is a contradiction. Replacing $n(n-1)$ by $n-1$ in $\sigma$, we obtain a permutation in $\fS_{n-1}(\rA_3, \rD_3)$ in which $1(n-1)$ is a factor. Hence, the number of these permutations considered in Step 2 is given by $f(n-1)/2$.
	\smallskip
		
	\noindent {\bf Step 3.} Let $g(n)$ be the number of permutations $\sigma\in \fS_n(\rA_3, \rD_3)$ which are considered in Step 1 but not in Step 2. I.e., $\sigma$ has the decomposition $\sigma=\sigma^{\prime}(n-1)\sigma^{\prime \prime}1n\sigma^{\prime \prime \prime}$. There are two subcases.

  \noindent {\bf 3-1.} $2\in \sigma^{\prime}(n-1)\sigma^{\prime \prime}$ is to the left of $1$, then $21$ must be a factor of $\sigma$. Indeed, if not, then $2xn$ is an occurrence of $\rA_3$ with $x\neq 2$ being the rightmost letter of $\sigma''$ (or $x=n-1$ in the case of $\sigma''=\varepsilon$). Replacing $21$ by $1$ and reducing all other letters by $1$, we obtain a permutation counted by $g(n-1)$. Note that this operation on $\sigma$ is invertible. That is, any permutation $\tau$ counted by $g(n-1)$ can be turned into a permutation $\sigma$ counted by $g(n)$, by increasing all letters by $1$ and replacing $2$ by $21$. No forbidden patterns can be introduced. Indeed, $2$ and $1$ are not involved together in an occurrence of $\rD_3$ because of $n$, while if $1$ alone is involved in an occurrence of $\rA_3$ or $\rD_3$, $2$ would have been involved (as $1$) in the same occurrence in $\tau$, contradicting with $\tau\in \fS_{n-1}(\rA_3, \rD_3)$. Hence, in case 3-1 we have $g(n-1)$ permutations.

	\noindent {\bf 3-2.} $2\in\sigma^{\prime \prime \prime}$ is to the right of $1$. First note that $2$ must be next to $n$, as otherwise $(n-1)x2$ is an occurrence of $\rD_3$ for some $x\in \sigma^{\prime \prime \prime}$. Similarly, $\sigma^{\prime \prime}=\varepsilon$ or else $(n-1)y2$ is an occurrence of $\rD_3$ for some $y\in\sigma^{\prime \prime}$. Hence, we have $\sigma=\sigma^{\prime}(n-1)1n2\sigma^{\prime \prime \prime}$. Let $\tau$ be the permutation obtained from $\sigma$ by removing $1n$ and decreasing every remaining letter by $1$. Note that $\tau\in\fS_{n-2}(\rA_3, \rD_3)$. Indeed, suppose $\tau$ contains $xyz$ as an occurrence of $\rA_3$ (considering $\rD_3$ is similar and hence is omitted), we consider all possible cases. Note that
		\begin{itemize}
			\item $x$ is to the left of $n-2$ because otherwise either $x=n-2$ (impossible) or $n-2$ is in the shaded area (upper left corner) for $xyz$.
				
			\item $z$ is to the right of $1$ because otherwise either $z=1$ (impossible) or $1$ is in the shaded area (lower right corner) for $xyz$.
		\end{itemize}
	But then it is easy to see that the three letters in $\sigma$ corresponding to $x, y, z$ in $\tau$ also form an occurrence of $\rA_3$, which is impossible. Hence $\tau\in \fS_{n-2}(\rA_3, \rD_3)$ as claimed. We now justify that the map $h: \sigma\rightarrow \tau$ is invertible. Clearly $\sigma$ could be recovered from $\tau$ by increasing every letter by $1$ and inserting $1n$ to separate $n-1$ from $2$. It remains to show that $\sigma$ as constructed above is indeed $\rA_3$- and $\rD_3$-avoiding. Suppose on the contrary that $xyz$ is an occurrence of $\rA_3$ in $\sigma$ ($\rD_3$ can be considered similarly and hence is omitted). Because of letter $2$ (resp., $n-1$), $xyz$ cannot be entirely to the left (resp., right) of $2$ (resp., $n-1$). But then the letters in $\tau$ corresponding to $x, y, z$ would be an occurrence of $\rA_3$, which is a contradiction. Lastly, note that $(n-2)1$ is a factor of $\tau$ so its reversal $\tau^{r}$ is counted in Step 1, and the number of such $\tau$ is given by $f(n-2)/2$. Now $\sigma$ and $\tau$ are in one-to-one correspondence via $h$, so in case 3-2 we have $f(n-2)/2$ permutations.

	Summarizing the cases, we have
	\begin{align*}
	\begin{cases}
		f(n)/2 =f(n-1)/2+g(n);\\
		g(n) =g(n-1)+f(n-2)/2.
	\end{cases}
	\end{align*}
	Solving the system we obtain that $f(n)=2f(n-1)$, which completes the proof through induction.
\end{proof}

%%%%%%%%%%%%%%%%%%%%%%%%%%%%%%%%%%%%%%%%%%%%%%%
\section{132-avoiding permutations that avoid one monotone mesh pattern}\label{sec:132 and one monotone}
%%%%%%%%%%%%%%%%%%%%%%%%%%%%%%%%%%%%%%%%%%%%%%%

The focus in this section is on $132$-avoiding permutations. We enforce on $\fS_n(132)$ the avoidance of a monotone pattern, either from the ascending family $\rA_k$, or from the descending family $\rD_k$, and get enumerative results on the avoiders. Note that the formula given in Theorem \ref{thm:132Ak} actually reduces to the one given in Theorem \ref{thm:132P1} when we set $k=2$. But we still state and prove Theorem \ref{thm:132P1} separately since it is quite typical and makes the proof of Theorem~\ref{thm:132Ak} easier. For all of the proofs, since the starting cases with $n=1,2$ are clearly seen to be true, we always assume that $n\ge 3$ unless otherwise noted.

\begin{theorem}\label{thm:132P1}
	For $n\geq 1$, we have $|\fS_n(132, \rA)|=\left\{\begin{array}{lll}1,& \text{ if }n=1;\\C_n-C_{n-1},&\text{ if }n\geq 2.\end{array}\right.$ 
\end{theorem}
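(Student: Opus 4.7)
First I would establish the following key characterization: for $n\ge 2$ and $\sigma\in\fS_n(132)$, the permutation $\sigma$ contains the mesh pattern $\rA$ if and only if $\sigma_n=n$. Granting this claim, the count is immediate, since $132$-avoiding permutations of $[n]$ ending in $n$ are in bijection with $\fS_{n-1}(132)$ via deletion of the trailing $n$, giving $C_{n-1}$ such permutations, whence $|\fS_n(132,\rA)|=C_n-C_{n-1}$. The base case $n=1$ is trivial because $\rA$ requires two entries.

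The ``if'' direction of the claim is direct. When $\sigma_n=n$, set $a^*:=\sigma^{-1}(1)<n$ and consider the pair of positions $(a^*,n)$. We have the ascent $\sigma_{a^*}=1<n=\sigma_n$, while the upper-left shaded cell (positions $<a^*$ with values $>n$) and the lower-right shaded cell (positions $>n$ with values $<1$) are both vacuously empty. Hence $(a^*,n)$ witnesses an $\rA$-occurrence.

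For the ``only if'' direction, I would invoke the classical recursive structure of $132$-avoiding permutations: writing $\sigma=\sigma'n\sigma''$ with $n$ at position $k$, the prefix $\sigma'$ contains exactly the top values $\{n-k+1,\ldots,n-1\}$ and the suffix $\sigma''$ contains $\{1,\ldots,n-k\}$. Assuming $\sigma_n\neq n$, so $k<n$ and the minimum $1$ lies in $\sigma''$, suppose for contradiction that positions $a<b$ with $\sigma_a<\sigma_b$ witness an $\rA$-occurrence. I would rule out all five possible positions of $k$: if $k<a$, then $\sigma_k=n$ sits before position $a$ with $\sigma_k>\sigma_b$, violating the upper-left shading; if $k=a$, then $\sigma_a=n$ precludes the ascent; if $a<k<b$, then $\sigma_a$ lies in the top block and $\sigma_b$ in the bottom block, forcing $\sigma_a>\sigma_b$; if $k=b$ or $k>b$, then $1\in\sigma''$ lies at some position $>b$ with $1<\sigma_a$ (since $\sigma_a\ge n-k+1\ge 2$ because $\sigma_a$ is in the top block and $k\le n-1$), violating the lower-right shading.

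The main hurdle is the careful five-case analysis matching the two shading cells against the position of $n$; beyond that, the argument is elementary given the structural recursion of $132$-avoiders. This template should also prove useful in the subsequent Theorem~\ref{thm:132Ak}, where one refines the characterization by tracking the avoidance of the monotone prefix $12\cdots(k-1)$ in the prefix $\sigma'$.
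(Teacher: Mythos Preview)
Your proof is correct and rests on the same characterization as the paper: within $\fS_n(132)$, a permutation contains $\rA$ if and only if it ends in $n$ (equivalently, $\sigma''=\epsilon$ in the decomposition $\sigma=\sigma' n\sigma''$). The paper establishes this by arguing that when $\sigma''\neq\epsilon$, any ascent pair inside $\sigma' n$ is blocked by an element of $\sigma''$ in the lower-right cell, and any ascent pair inside $\sigma''$ is blocked by $n$ in the upper-left cell; your five-case analysis on the position of $n$ relative to $(a,b)$ is the same argument reorganized. The one genuine difference is in the counting: the paper sums $\sum_{i=0}^{n-2}C_iC_{n-i-1}$ over the possible lengths of $\sigma'$ and then invokes the Catalan convolution to simplify, whereas you count the complement directly as $C_{n-1}$ and subtract, which is slightly more economical.
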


\begin{proof}
	Recall that $|\fS_n(132)|=C_n$, and every permutation $\sigma\in\fS_n(132)$ decomposes nicely, that is, $\sigma=\sigma^{\prime}n\sigma^{\prime \prime}$ where the elements in $\sigma^{\prime}$ are larger than those in $\sigma^{\prime \prime}$. Suppose there are $i$ elements in $\sigma^{\prime}$ and $(n-i-1)$ elements in $\sigma^{\prime \prime}$ for $0\leq i\leq n-2$. The reason for $i\neq n-1$ is that in order to avoid $\rA$, $n$ must precedes $1$ in $\sigma$, since otherwise $(1n)$ will form a pattern $\rA$. Next, we claim that 
	$$\text{$\sigma\in\fS_n(132,\rA)$ if and only if both $\sigma'$ and $\sigma''$ are $132$-avoiding.}$$

	The ``only if'' part is clear since both $\sigma'$ and $\sigma''$ are factors of $\sigma$. Now suppose both $\sigma'$ and $\sigma''\neq \epsilon$ are $132$-avoiding, with all letters in $\sigma'$ larger than all letters in $\sigma''$, we proceed to show that this is enough to ensure that $\sigma=\sigma' n\sigma''\in\fS_n(132,\rA)$. Indeed, these conditions clearly ensure that $\sigma$ avoids $132$. Moreover, since $\sigma''$ is nonempty, every potential $\rA$-pattern pair from $\sigma'n$ will be saved by any letter from $\sigma''$. Likewise, any potential $\rA$-pattern pair from $\sigma''$ will be saved by the presence of $n$ to their left. Hence $\sigma$ avoids $\rA$ as well. 

	The claim is now proven and it gives rise to the following relation, as desired.
	\begin{align*}
		|\fS_n(132, \rA)|=\sum_{i=0}^{n-2}C_iC_{n-i-1}=C_n-C_{n-1},
	\end{align*}
	where we utilize the convolutive recursion for Catalan numbers $C_n=\sum_{i=0}^{n-1}C_{i}C_{n-i-1}$, for $n\ge 1$, and $C_0=1$.
\end{proof}

\begin{theorem}\label{thm:132Ak}
	For $n\geq 1$ and $k\ge 3$, we have 
	$$|\fS_n(132,\rA_k)|=C_{n}-C_{n-1}+|\fS_{n-1}(132,12\cdots(k-1))|.$$
\end{theorem}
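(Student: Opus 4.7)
The plan is to extend the decomposition argument used in the proof of Theorem~\ref{thm:132P1}. Recall that every $\sigma\in\fS_n(132)$ decomposes uniquely as $\sigma=\sigma' n\sigma''$ with all letters of $\sigma'$ larger than all letters of $\sigma''$, and with $\sigma'$ and $\sigma''$ themselves $132$-avoiding. The plan is to split the enumeration of $\fS_n(132,\rA_k)$ according to whether $\sigma''=\varepsilon$.

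In the case $\sigma''\ne\varepsilon$, the claim is that $\sigma$ automatically avoids $\rA_k$, so no further constraint on $\sigma'$ or $\sigma''$ is imposed beyond $132$-avoidance. The key observation is that a length-$k$ increasing subsequence of $\sigma$ cannot mix letters from $\sigma'$ and $\sigma''$: the letters of $\sigma'$ are all larger than those of $\sigma''$, yet appear to their left, which is incompatible with being part of an increasing sequence. Thus any potential $\rA_k$-occurrence is (i) entirely inside $\sigma'$, (ii) entirely inside $\sigma''$, or (iii) composed of $k-1$ letters of $\sigma'$ together with $n$ as the top dot. In case~(i), the bottom-right corner shading forbids any letter smaller than the leftmost dot to appear after the rightmost dot; but every element of $\sigma''$ lies to the right and is strictly smaller than every letter of $\sigma'$, so any such element violates the shading. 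In case~(iii) the same violation occurs, since $n$ plays the role of the rightmost dot and $\sigma''$ sits strictly to its right. In case~(ii), the top-left corner shading forbids any letter above the tallest dot from appearing before the leftmost dot, but $n$ does exactly that. Hence this case contributes the number of $\sigma\in\fS_n(132)$ with $n$ not at the final position, namely $C_n-C_{n-1}$.

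In the case $\sigma''=\varepsilon$, one has $\sigma=\sigma' n$, and the claim is that $\sigma\in\fS_n(\rA_k)$ if and only if $\sigma'$ avoids the classical pattern $12\cdots(k-1)$. Indeed, any length-$(k-1)$ increasing subsequence of $\sigma'$ can be extended by $n$ to a length-$k$ increasing subsequence whose two corner shadings are trivially vacuous (nothing exceeds $n$, and nothing lies to the right of $n$), yielding an $\rA_k$-occurrence. Conversely, any $\rA_k$-occurrence in $\sigma' n$ forces at least $k-1$ of its letters to lie inside $\sigma'$ and to form an increasing subsequence there. Therefore this case contributes exactly $|\fS_{n-1}(132,12\cdots(k-1))|$.

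Summing the two contributions yields the claimed formula. The main obstacle, as I see it, is the careful verification in the first case that no $\rA_k$-configuration can succeed: one must rule out mixing of $\sigma'$ and $\sigma''$ in an increasing subsequence and then check, case by case, that each of the three remaining subcases is defeated by an appropriately chosen element of $\{n\}\cup\sigma''$. Once this case analysis is secured, the rest is bookkeeping, since permutations in $\fS_n(132)$ with $n$ at the rightmost position are in obvious bijection with $\fS_{n-1}(132)$.
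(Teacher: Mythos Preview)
Your proposal is correct and follows essentially the same decomposition argument as the paper's proof: split according to whether $\sigma''=\varepsilon$, show that in the case $\sigma''\ne\varepsilon$ every length-$k$ increasing subsequence is blocked from being an $\rA_k$-occurrence by either $n$ or an element of $\sigma''$, and in the case $\sigma''=\varepsilon$ reduce to classical $12\cdots(k-1)$-avoidance in $\sigma'$. Your treatment is in fact slightly more careful than the paper's, since you explicitly isolate and dispatch subcase~(iii) (increasing subsequences ending in $n$), which the paper's wording ``completely contained in either $\sigma'$ or $\sigma''$'' leaves implicit.
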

\begin{proof}
	Given a permutation $\sigma \in \fS_{n}(132, \rA_k)$, we can express $\sigma$ as $\sigma = \sigma^{\prime} n \sigma^{\prime \prime}$. It is evident that all elements in $\sigma^{\prime}$ must be greater than those in $\sigma^{\prime \prime}$ to avoid the $132$ pattern. We now consider two scenarios.
	\begin{enumerate}
		\item 	If $\sigma^{\prime \prime}\neq\epsilon$, then we claim that it suffices to require that both $\sigma^{\prime}$ and $\sigma^{\prime \prime}$ are $132$-avoiding. Clearly this condition is necessary and it ensures that $\sigma=\sigma'n\sigma''$ is itself $132$-avoiding. To see that it also guarantees that $\sigma$ avoids $\rA_k$, note that any potential $\rA_k$ pattern, i.e., an increasing subsequence of length $k$, must be completely contained in either $\sigma'$ or $\sigma''$ but not in both. If it is contained in $\sigma'$, it is prevented from being a truly $\rA_k$ pattern by any element in $\sigma''$ (recall that $\sigma''\neq\epsilon$); if it is contained in $\sigma''$, it is prevented from being a truly $\rA_k$ pattern by $n$. Consequently, we see that all $\sigma$ in this case are in bijection with the pair $(\sigma',\sigma'')$. Then a similar computation as in the proof of Theorem~\ref{thm:132P1} outputs the count $C_n-C_{n-1}$.
		\item  	If $\sigma^{\prime \prime}=\epsilon$, then $\sigma^{\prime}$ must avoid any increasing subsequence of length $k-1$, since such a subsequence combined with $n$ in the right end will form a $\rA_k$-pattern in $\sigma$. Conversely, as long as $\sigma^{\prime}$ avoids both $132$ and $1\cdots (k-1)$, appending it by $n$ in the end gives us a permutation in $\fS_n(132,\rA_k)$. Therefore, we see that the correspondence between $\sigma=\sigma'n$ and $\sigma'$ is bijective, rendering the count for this case to be $|\fS_{n-1}(132, 1\cdots(k-1))|$.
	\end{enumerate}
	Combining both cases, we obtain the following formula as desired:
	$$|\fS_n(132, \rA_k)| = C_n - C_{n-1} + |\fS_{n-1}(132, 12\cdots(k-1))|.$$
\end{proof}

\begin{remark}
The pattern avoiding class $\fS_n(132,1\cdots k)$ have been enumerated in the literature. Writing $G_k(x):=\sum_{n\ge 0}|\fS_n(132,1\cdots k)|x^n$, we have the following result derived in \cite{CW99} and \cite[Thm.~3.1]{MV00} ; see also \cite[Section~6.1.5]{Kit11}.
$$G_k(x)=\frac{U_{k-1}(\frac{1}{2\sqrt{x}})}{\sqrt{x}U_k(\frac{1}{2\sqrt{x}})},$$
where $U_j$, as defined by $U_j(\cos\theta)=\sin(j+1)\theta/\sin\theta$, is the $j$th Chebyshev polynomial of the second kind.
\end{remark}

The next three enumerative results concern patterns from the decreasing family $\rD_k$. As in the proofs of the previous two theorems, we always assume a $132$-avoiding permutation $\sigma$ decomposes as $\sigma=\sigma'n\sigma''$, where every element in $\sigma'$ is larger than any element in $\sigma''$.

\begin{theorem}\label{thm:132D}
	For $n\geq 1$, we have $|\fS_n(132, \rD)|=C_{n-1}$. 
\end{theorem}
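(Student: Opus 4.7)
The plan is to use the standard $132$-avoiding decomposition $\sigma = \sigma'n\sigma''$, where every element in $\sigma'$ exceeds every element in $\sigma''$, and to show that the extra avoidance of the mesh pattern $\rD$ forces $\sigma''=\varepsilon$. Then what remains of $\sigma$ is $\sigma'n$ with $\sigma'\in\fS_{n-1}(132)$ arbitrary, and the count $C_{n-1}$ drops out immediately.

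First I would show that if $\sigma''\neq\varepsilon$, then $\sigma$ already contains $\rD$. The key observation is that the letter $1$ must appear in $\sigma''$ (since $\sigma'$ contains only the largest $|\sigma'|$ letters), and the letter $n$ sits in between $\sigma'$ and $\sigma''$. Consider the descending pair $(n,1)$: the lower-left shading of $\rD$ requires the absence of any element smaller than $1$ to the left of $n$, which is vacuous, and the upper-right shading requires the absence of any element larger than $n$ to the right of $1$, which is also vacuous. Hence $(n,1)$ is always an occurrence of $\rD$ in $\sigma$ whenever $\sigma''\neq\varepsilon$, contradicting $\sigma\in\fS_n(\rD)$. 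So $\sigma''=\varepsilon$.

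Next I would verify the converse direction: every permutation of the form $\sigma=\sigma'n$ with $\sigma'\in\fS_{n-1}(132)$ lies in $\fS_n(132,\rD)$. Such a $\sigma$ clearly avoids $132$, so only the $\rD$-avoidance needs checking. Any inversion in $\sigma$ must be contained in the prefix $\sigma'$, since $n$ is the maximum and sits last. For any descending pair $(\sigma'_i,\sigma'_j)$ with $i<j$ inside $\sigma'$, the element $n$ appears strictly to the right of position $j$ in $\sigma$ and satisfies $n>\sigma'_i$, which places $n$ inside the upper-right shaded region of $\rD$. Thus no inversion of $\sigma$ can be promoted to an occurrence of $\rD$, so $\sigma$ avoids $\rD$.

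Combining the two directions yields a bijection $\fS_n(132,\rD)\longleftrightarrow\fS_{n-1}(132)$ sending $\sigma'n\mapsto\sigma'$, whence $|\fS_n(132,\rD)|=|\fS_{n-1}(132)|=C_{n-1}$. I do not expect any real obstacle here: the only thing to watch is to invoke the lower-left and upper-right shadings in the correct direction, and to note that the small-element/large-element structure of the $132$-avoiding decomposition puts $1$ in $\sigma''$ (when $\sigma''\neq\varepsilon$), which is exactly what makes the witness pair $(n,1)$ unavoidable.
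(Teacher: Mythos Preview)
Your proposal is correct and follows essentially the same approach as the paper: both use the $132$-avoiding decomposition $\sigma=\sigma'n\sigma''$, argue that $\sigma''$ must be empty (the paper notes that $(n,a)$ is a $\rD$-occurrence for \emph{any} $a\in\sigma''$, while you pick the specific witness $(n,1)$), and then observe that $n$ on the right kills every potential $\rD$-pair inside $\sigma'$. The bijection with $\fS_{n-1}(132)$ and the count $C_{n-1}$ are identical.
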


\begin{proof}
	For any $\sigma=\sigma^{\prime}n\sigma^{\prime \prime}\in \fS_n(132, \rD)$, the first thing to notice is that $\sigma^{\prime \prime}=\epsilon$. If not, then $na$ forms a $\rD$-pattern for any element $a$ from $\sigma''$. Hence $\sigma=\sigma^{\prime}n$, where $\sigma^{\prime}\in\fS_{n-1}(132)$. Here only $132$-avoiding is sufficient since any potential $\rD$-pair in $\sigma'$ will be saved by $n$ to their right. Conversely, appending any permutation $\sigma'\in\fS_{n-1}(132)$ with $n$ creates a permutation $\sigma=\sigma'n\in\fS_n(132,\rD)$. So we have $|\fS_n(132, \rD)|=|\fS_{n-1}(132)|=C_{n-1}$.
\end{proof}

% \begin{theorem}\label{thm:132P3}
% 	For $n\geq 1$, we have $|\fS_n(132, \rA_3)|=C_n-C_{n-1}+1$.
% \end{theorem}

% \begin{proof}
% 	It is clear for $n=1, 2$. Similarly we can write $\pi=\pi^{\prime}n\pi^{\prime \prime}\in \fS_n(132, \rA_3)$ where the elements in $\pi^{\prime}$ is larger than those in $\pi^{\prime \prime}$. Now there are two cases (1) $\pi^{\prime \prime}=\emptyset$ and (2) $\pi^{\prime \prime}\neq \emptyset$. For (1), there is only one $\pi=\pi^{\prime}n$, that is, $\pi=(n-1)(n-2)\cdots 1n$. If not, then $(1an)$ will form a pattern $\rA_3$ where positive integer $a\in [2, n-1]$. For (2), we have the same discussion to the proof of Theorem \ref{thm:132P1}, then the number if $C_{n}-C_{n-1}$. Hence we have $|\fS_n(132, \rA_3)|=C_n-C_{n-1}+1$ for $n\geq 3$.
% \end{proof}

\begin{theorem}\label{thm:132D3}
	For $n\geq 1$, we have $|\fS_n(132, \rD_3)|=C_{n-1}+n-1$. 
\end{theorem}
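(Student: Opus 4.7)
The plan is to exploit the standard decomposition of $\sigma \in \fS_n(132)$ as $\sigma = \sigma' n \sigma''$, where every entry of $\sigma'$ exceeds every entry of $\sigma''$ and both factors are themselves $132$-avoiding. I will split the enumeration according to whether $\sigma''$ is empty; the two cases will contribute $C_{n-1}$ and $n-1$ respectively.

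First I would handle the case $\sigma'' = \varepsilon$, so $\sigma = \sigma' n$ with $\sigma' \in \fS_{n-1}(132)$. I claim any such $\sigma$ automatically avoids $\rD_3$. Indeed, any potential triple $(\sigma_a, \sigma_b, \sigma_c)$ forming $\rD_3$ must satisfy $c \le n-1$ (otherwise $\sigma_c = n$ would need to be dominated by $\sigma_a, \sigma_b$), hence all three entries lie in $\sigma'$. But then the element at position $n$, namely $n$ itself, lies to the right of $c$ and exceeds $\sigma_a$, violating the top-right shading of $\rD_3$. So no additional restriction on $\sigma'$ is needed, and this case contributes $C_{n-1}$ permutations.

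Next I would treat $\sigma'' \neq \varepsilon$ via a case analysis on the placement of $(a,b,c)$ relative to the three blocks $\sigma'$, $n$, $\sigma''$. Configurations in which $n$ appears as $\sigma_b$ or $\sigma_c$ are impossible since $n$ is the largest entry, and configurations entirely inside $\sigma'$ are again blocked by $n$'s presence to the right. The surviving configurations are: (I) $\sigma_a, \sigma_b \in \sigma'$ with $\sigma_c \in \sigma''$; (II) $\sigma_a \in \sigma'$ with $\sigma_b, \sigma_c \in \sigma''$; (III) $\sigma_a = n$ with $\sigma_b, \sigma_c \in \sigma''$; and (IV) $\sigma_a, \sigma_b, \sigma_c \in \sigma''$. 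Using the key fact $\sigma' > \sigma''$, in each of (I)-(III) the bottom-left shading is automatic (elements before $a$ in $\sigma'$ and $n$ all exceed $\sigma_c \in \sigma''$) and the top-right shading is automatic (elements after $c$ in $\sigma''$ all lie below $\sigma_a$). Consequently, any descent in $\sigma'$ (via (I)) or in $\sigma''$ (via (II) or (III)) produces a genuine $\rD_3$ occurrence, so both $\sigma'$ and $\sigma''$ must be strictly increasing; this in turn vacates configuration (IV).

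Since $\sigma'$ and $\sigma''$ are both increasing and $\sigma' > \sigma''$ elementwise, $\sigma$ is entirely determined by $i := |\sigma'|$, and the range $0 \le i \le n-2$ (ensuring $\sigma'' \neq \varepsilon$) gives exactly $n-1$ permutations. Summing the two cases yields $|\fS_n(132, \rD_3)| = C_{n-1} + (n-1)$. The main obstacle is the bookkeeping in the multi-way case analysis, but this becomes routine once one records that every element of $\sigma'$ lies strictly above the value range of $\sigma''$, which forces both shading cells to be vacuously satisfied in configurations (I)-(III).
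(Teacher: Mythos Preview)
Your proof is correct and follows essentially the same approach as the paper: decompose $\sigma=\sigma' n\sigma''$, observe that $\sigma''=\varepsilon$ contributes $C_{n-1}$ since $n$ on the right kills any $\rD_3$ inside $\sigma'$, and for $\sigma''\neq\varepsilon$ argue that any descent in either block yields a $\rD_3$ occurrence (the paper does this by citing the proof of Theorem~\ref{thm:P1P4}, using the specific triples $ab1$ and $nab$, which are exactly your configurations (I) and (III)). One small omission: you should state the easy converse that each of the $n-1$ permutations $a(a{+}1)\cdots n\,12\cdots(a{-}1)$ actually lies in $\fS_n(132,\rD_3)$---it avoids $321$ as a classical pattern, hence $\rD_3$---but this is routine.
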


\begin{proof}
	We deal with the two cases (1) $\sigma^{\prime \prime}=\epsilon$ and (2) $\sigma^{\prime \prime}\neq\epsilon$ separately. For (1), a similar argument as in the proof of Theorem~\ref{thm:132D} shows that such $\sigma$ is in bijection with $\sigma'\in\fS_{n-1}(132)$, and the count for this case is $C_{n-1}$. For (2), following the same line of the proof of Theorem~\ref{thm:P1P4}, we see that the elements in $\sigma^{\prime}$ and $\sigma^{\prime \prime}$ are both monotonically increasing, hence the count for this case is $n-1$. Putting together these two cases, we have $|\fS_n(132, \rD_3)|=C_{n-1}+n-1$.
\end{proof}

\begin{theorem}\label{thm:132D4}
	For $n\geq 1$, we have $|\fS_n(132,\rD_4)|=C_{n-1}+\frac{\left(n-1\right)\left(2n^2-7n+12\right)}{6}$.
\end{theorem}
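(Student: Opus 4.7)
The plan is to follow the $\sigma=\sigma'n\sigma''$ decomposition used throughout Section~\ref{sec:132 and one monotone}, where $\sigma\in\fS_n(132)$ forces every letter of $\sigma'$ to exceed every letter of $\sigma''$, and to split into two cases according to whether $\sigma''$ is empty.

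For Case~1 ($\sigma''=\varepsilon$), the argument is almost identical to that of Theorem~\ref{thm:132D}: any classical $4321$-subsequence of $\sigma=\sigma'n$ must lie entirely inside $\sigma'$, and the terminal letter $n$ supplies an element to the right of the subsequence that exceeds its first entry, violating the top-right shaded corner of $\rD_4$. Thus the contribution of this case is $|\fS_{n-1}(132)|=C_{n-1}$.

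The main work is Case~2 ($\sigma''\neq\varepsilon$). I will classify each classical $4321$-subsequence of $\sigma$ by how its four entries distribute among $\sigma'$, $\{n\}$, and $\sigma''$. A direct inspection---using that every letter of $\sigma'$ strictly exceeds every letter of $\sigma''$ and that $n$ is the global maximum---shows that only $4321$-patterns lying entirely inside $\sigma'$ are rescued (by $n$ on the right); every other configuration ($n$ followed by a $321$ in $\sigma''$; or $k$ entries from $\sigma'$ together with $4-k$ entries from $\sigma''$ for some $k\in\{1,2,3\}$; or a $4321$ entirely inside $\sigma''$) automatically meets both the bottom-left and the top-right shaded-corner constraints and hence constitutes a genuine $\rD_4$-occurrence. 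Translating this dichotomy gives the following characterization in Case~2: both $\sigma'$ and $\sigma''$ are $\{132,321\}$-avoiding, and if both are nonempty then at least one of them is monotonically increasing (otherwise an inversion in each produces a two-from-$\sigma'$-two-from-$\sigma''$ type $4321$).

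Enumeration then invokes the classical result $|\fS_m(132,321)|=\binom{m}{2}+1$ (see, e.g., \cite{Kit11}). The subcase $\sigma'=\varepsilon$ contributes $\binom{n-1}{2}+1$ permutations; the subcase where both $\sigma'$ and $\sigma''$ are nonempty is handled by summing over $i=|\sigma'|\in\{1,\ldots,n-2\}$ and applying inclusion-exclusion to the ``at least one increasing'' clause, yielding $\sum_{i=1}^{n-2}\bigl(a_i+a_{n-1-i}-1\bigr)$ with $a_m=\binom{m}{2}+1$. Using the hockey-stick identity $\sum_{i=1}^{n-2}\binom{i}{2}=\binom{n-1}{3}$, adding the Case~1 count, and simplifying collapses the total to $C_{n-1}+(n-1)(2n^2-7n+12)/6$. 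The main obstacle is the Case~2 dichotomy itself---verifying that every ``mixed'' or ``shifted'' $4321$ really does satisfy both shaded-corner constraints, which hinges on the strict ordering between the $\sigma'$-block and the $\sigma''$-block together with the extremality of $n$; once that is secured, the remaining bookkeeping is routine arithmetic.
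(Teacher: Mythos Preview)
Your proof is correct and follows essentially the same decomposition and case analysis as the paper's: both split on whether $\sigma''$ is empty, identify the $\{132,321\}$-avoidance and ``at least one side increasing'' conditions when $\sigma''\neq\varepsilon$, and arrive at the same sum $C_{n-1}+\binom{n-1}{2}+1+2\sum_{i=1}^{n-2}\bigl(\binom{i}{2}+1\bigr)-(n-2)$. One small caveat: your dichotomy claim that a $4321$ lying entirely inside $\sigma''$ ``automatically'' satisfies both corner constraints is not literally true (e.g.\ in the $132$-avoiding word $\sigma''=43215$ the subsequence $4321$ is rescued at the top-right corner by the trailing $5$), but this case is vacuous once the ``$n$ followed by a $321$ in $\sigma''$'' configuration already forces $\sigma''$ to be $321$-avoiding, so your characterization and enumeration are unaffected.
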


\begin{proof}
Let $\sigma=\sigma^{\prime}n\sigma^{\prime \prime} \in \fS_{n}(132,\rD_4)$. We examine the following three cases:
	 \begin{enumerate}
	 	\item $\sigma^{\prime \prime}$ is empty. The presence of $n$ to the right of $\sigma'$ prevents any decreasing quadruple in $\sigma'$ from being a truly $\rD_4$ pattern of $\sigma$, so $\sigma'$ being $132$-avoiding is sufficient (and necessary). Hence the count for this case is $C_{n-1}$.
	 	\item If $\sigma^{\prime \prime}$ is nonempty but $\sigma^{\prime}$ is empty, then any decreasing triple $xyz$ in $\sigma''$ jointly with $n$ will form a $\rD_4$ pattern in $\sigma$, so $\sigma''$ must be $321$-avoiding (in addition to being $132$-avoiding). Conversely, any permutation $\sigma''\in\fS_{n-1}(132,321)$ with $n$ being appended at the beginning creates a permutation $\sigma=n\sigma''\in\fS_n(132,\rD_4)$. So such $\sigma$ is in bijection with $\sigma''\in\fS_{n-1}(132,321)$. The cardinality $|\fS_{n-1}\left(132,321\right)|$ is known to be $\binom{n-1}{2}+1$; see for instance \cite[Prop.~11]{SS85}. 
	 	\item If neither $\sigma^{\prime}$ nor $\sigma^{\prime \prime}$ is empty, then to avoid the $\rD_4$ pattern, one of these two subpermutations must be in increasing order. Suppose not, then a descent pair $wx$ in $\sigma^{\prime}$ combined with a descent pair $yz$ in $\sigma^{\prime \prime}$ would create an occurrence of the $\rD_4$ pattern. We first assume that $\sigma^{\prime}$ is an increasing subpermutation, then $\sigma^{\prime \prime}$ must avoid both $132$ and $321$ (classical) patterns, resulting in $\sum_{i=1}^{n-2}(\binom{i}{2} + 1)$ permutations. In the same vein, when $\sigma''$ is increasing, we must have $\sigma'$ being simultaneously $132$- and $321$-avoiding, giving rise to another $\sum_{i=1}^{n-2}(\binom{i}{2} + 1)$. Finally, we have to subtract $n-2$ from the total count though, since these permutations $a(a+1)\cdots n12\cdots (a-1)$, $2\le a\le n-1$, have been double-counted.
	 	\end{enumerate}

Combining all cases and simplifying the expression 
	$$C_{n-1} + 2\sum_{i=1}^{n-2}\left(\binom{i}{2} + 1\right) - (n-2) + \binom{n-1}{2} + 1$$ 
yields the desired result. 
\end{proof}

%%%%%%%%%%%%%%%%%%%%%%%%%%%%%%%%
\section{Concluding remarks}\label{sec:concluding_remarks}
%%%%%%%%%%%%%%%%%%%%%%%%%%%%%%%%

Employing combinatorial approach, we verify in this paper all fourteen jointly equidistributed pairs of mesh patterns listed in Tabel 11 from the paper of Lv and Zhang~\cite{LZ25}. The additional conjectures in \cite{LZ25} concerning equidistributions between different pairs are still awaiting proof. Moreover, we would like to make two further remarks that may stimulate future research.

Viewing the enumeration result in Theorem~\ref{thm:132Ak} for any $k\ge 3$, it is natural to seek for a comparable result when $\rA_k$ is replaced by $\rD_k$, so as to generalize the results in Theorems \ref{thm:132D}, \ref{thm:132D3}, and \ref{thm:132D4}.

Furthermore, as alluded to in the Introduction, besides enumerating pattern-avoiding permutations, one can treat the number of occurrences of certain (mesh) pattern as a permutation statistic and study its distribution. The following result is an initial example in this direction. Let $S_n(t):=\sum_{\sigma\in \fS_n(132)}t^{\rA(\sigma)}$ and recall the generating function of Catalan numbers $C(x):=\sum_{n\geq 0}C_nx^n$, which satisfies the quadratic equation $$xC(x)^2-C(x)+1=0.$$
% Let  be the generating function for the Catalan numbers, which is known to satisfy the quadratic equation 
% {\SF The following result should be a refinement of the above quadratic equation, check the derivation.}

\begin{theorem}\label{thm:dis_P1in132}
The generating function $S(x, t):=\sum_{n\geq 0}S_n(t)x^n$ satisfies the following equation:
\begin{align}\label{eq:dis_P1in132}
	S(x, t)=(1-x)C(x)+xS(xt, t).
\end{align}
\end{theorem}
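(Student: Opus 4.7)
My plan is to derive a recurrence for $S_n(t)$ using the standard decomposition of a $132$-avoiding permutation as $\sigma = \sigma' n \sigma''$, where every letter of $\sigma'$ exceeds every letter of $\sigma''$, and then translate that recurrence into the stated functional equation by a routine generating-function manipulation.

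The heart of the argument is a structural observation about which $\rA$-occurrences survive in $\sigma$. Recall that an occurrence of $\rA$ is a pair $\sigma_i < \sigma_j$ with $i < j$ such that no letter to the left of $\sigma_i$ exceeds $\sigma_j$ and no letter to the right of $\sigma_j$ is smaller than $\sigma_i$. I would first check that if $\sigma'' \neq \varepsilon$, then $\rA(\sigma) = 0$: an increasing pair entirely in $\sigma'$, or of the form $(\sigma_a, n)$, has the elements of $\sigma''$ to its right, and since those are smaller than every letter of $\sigma' \cup \{n\}$, the lower-right shading of $\rA$ is violated; an increasing pair entirely in $\sigma''$ has $n$ to its left, destroying the upper-left shading. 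When $\sigma'' = \varepsilon$ (so $\sigma = \sigma' n$), the situation reverses: each of the $n-1$ pairs $(\sigma_i, n)$ with $\sigma_i \in \sigma'$ genuinely forms an $\rA$-occurrence (nothing exceeds $n$, and nothing lies to its right), while the $\rA$-occurrences with both entries in $\sigma'$ coincide with those of $\sigma'$ alone, since the $n$ appended to the right is larger than any $\sigma_i$ and thus does not affect the lower-right shading condition. This yields $\rA(\sigma' n) = \rA(\sigma') + (n-1)$.

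Summing $t^{\rA(\sigma)}$ over $\sigma \in \fS_n(132)$ according to $|\sigma'|$ now gives, for $n \geq 1$,
\[
S_n(t) \;=\; t^{n-1} S_{n-1}(t) + \sum_{k=0}^{n-2} C_k C_{n-1-k} \;=\; t^{n-1} S_{n-1}(t) + (C_n - C_{n-1}),
\]
where the last equality uses the Catalan convolution $C_n = \sum_{k=0}^{n-1} C_k C_{n-1-k}$. Multiplying by $x^n$, summing over $n \geq 1$, and recognising $\sum_{n \geq 1} t^{n-1} S_{n-1}(t)\, x^n = x\, S(xt, t)$ together with $\sum_{n \geq 1} (C_n - C_{n-1}) x^n = (1-x)C(x) - 1$, one obtains $S(x,t) - 1 = x\, S(xt, t) + (1-x)C(x) - 1$, which is precisely \eqref{eq:dis_P1in132}. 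The only delicate step is the structural case analysis above; once that is in place, the recurrence and its transform into a functional equation are straightforward bookkeeping.
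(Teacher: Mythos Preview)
Your proposal is correct and follows essentially the same approach as the paper: both decompose $\sigma=\sigma' n\sigma''\in\fS_n(132)$, observe that $\rA(\sigma)=0$ when $\sigma''\neq\varepsilon$ while $\rA(\sigma'n)=\rA(\sigma')+(n-1)$, derive the recurrence $S_n(t)=t^{n-1}S_{n-1}(t)+(C_n-C_{n-1})$, and translate it into the functional equation. The only cosmetic difference is that you spell out the $\sigma''\neq\varepsilon$ case directly, whereas the paper cites Theorem~\ref{thm:132P1} for that observation.
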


\begin{proof}
	Given a permutation $\sigma=\sigma^{\prime}n\sigma^{\prime \prime}\in \fS_{n}(132)$, $n\ge 2$, we know from previous proofs that $\sigma\in\fS_n(132,\rA)$ if and only if $\sigma''\neq\epsilon$. These permutations correspond to the constant term $S_n(0)$ and are counted by $C_n-C_{n-1}$ (see Theorem~\ref{thm:132P1}).

	Otherwise $\sigma''=\epsilon$, and we see that appending $n$ at the end of $\sigma'$ creates $n-1$ new $\rA$-patterns (in $1n,2n,\ldots,(n-1)n$), and all existing $\rA$-patterns stay as $\rA$-patterns. This discussion gives us
	\begin{align*}
		S_n(t)=(C_n-C_{n-1})t^0+t^{n-1}S_{n-1}(t).
	\end{align*}
	Multiplying both sides by $x^n$ and summing over $n\geq 1$, we have
	\begin{align*}
		\sum_{n\geq 1}S_n(t)x^n=\sum_{n\geq 1}C_nx^n-\sum_{n\geq 1}C_{n-1}x^n+\sum_{n\geq 1}S_{n-1}(t)t^{n-1}x^n,
	\end{align*}
	which simplifies to \eqref{eq:dis_P1in132}.
\end{proof}

\section*{Acknowledgements}
The authors are grateful to Shuzhen Lv for providing data relevant to our research. Shishuo Fu was partially supported by the National Natural Science Foundation of China grants 12171059 and 12371336.

\end{document}